\newtheorem*{rep@thm}{\rep@title}
\newcommand{\newreptheorem}[2]{%
\newenvironment{rep#1}[1]{%
 \def\rep@title{#2 \ref{##1}}%
 \begin{rep@thm}}%
 {\end{rep@thm}}}
\newtheorem{thm}{Theorem}[section]
\newtheorem{cor}[thm]{Corollary}
\newtheorem{conj}[thm]{Conjecture}
\newtheorem{prop}[thm]{Proposition}
\newtheorem{lem}[thm]{Lemma}
\theoremstyle{definition}
\newtheorem*{defn}{Definition}
\newtheorem*{conv}{Convention}
\newtheorem{exmp}[thm]{Example}
\theoremstyle{remark}
\newcommand{\N}{\mathbb N}
\newcommand{\Q}{\mathbb Q}
\newcommand{\R}{\mathbb R}
\newcommand{\C}{\mathbb C}
\newcommand{\la}{\langle}
\newcommand{\ra}{\rangle}
\let\emptyset\varnothing
\let\epsilon\varepsilon
\let\phi\varphi
\let\bdy\partial
\newcommand*{\mb}[1]{\ensuremath{\mathbb{#1}}}
\newcommand*{\mf}[1]{\ensuremath{\mathfrak{#1}}}
\newcommand*{\bs}{\backslash}
\newcommand{\laa}{\la\kern-1.2ex~\la}
\newcommand{\raa}{\ra\kern-1.2ex~\ra}
\DeclareMathOperator{\SL}{SL} 
\DeclareMathOperator{\ad}{ad}
\title{Algebraic Varieties and Automorphic Functions}
\author[Sebastian Eterovi\'c]{Sebastian Eterovi\'c}
\address{Kurt G\"odel Research Center, Universit\"at Wien, 1090 Wien, Austria}
\email{sebastian.eterovic@univie.ac.at}
\author[Roy Zhao]{Roy Zhao}
\address{Department of Mathematics, California Institute of Technology, Pasadena, CA, USA.}
\email{rhzhao@caltech.edu}
\thanks{Both authors were supported by NSF RTG grant DMS-1646385. SE was also supported by EPSRC fellowship EP/T018461/1}
\keywords {Existential Closedness, Shimura varieties}
\subjclass[2020] {11F03, 14G35}
\begin{document}

\begin{abstract}
Let $(G, X)$ be a Shimura datum, let $\Omega$ be a connected component of $X$, let $\Gamma$ be a congruence subgroup of $G(\Q)^{+}$, and consider the quotient map $q: \Omega \to S:=\Gamma \bs \Omega$. Consider the Harish-Chandra embedding $\Omega\subset\mb{C}^{N}$, where $N=\dim X$. We prove two results that give geometric conditions which, if satisfied by an algebraic variety $V \subset \C^{N} \times S$, ensure that there is a Zariski dense subset of $V$ of points of the form $(x,q(x))$. 
\end{abstract}

\maketitle

\section{Introduction}
Let $(G, X)$ be a Shimura datum with $G$ reductive over $\Q$ and $K$ a compact open subgroup of $G(\mb{A}_f)$, where $\mb{A}_f$ will denote the ring of finite rational ad\`eles.
Let $G^{\ad}$ denote the adjoint group which is the quotient of $G$ by its center and let $G(\R)_+$ denote the elements of $G(\R)$ which map to the identity component of $G^{\ad}(\R)$.
Let $G(\Q)_+ \coloneqq G(\Q) \cap G(\R)_+$.
Let $\Omega$ be a connected component of $X$ and let $\Gamma \coloneqq G(\Q)_+ \cap K$, then the quotient
\[S \coloneqq  \Gamma \bs \Omega\]
has the structure of an algebraic variety and the quotient map $q \colon \Omega \to S$ is analytic and transcendental.
Let $E_q\coloneqq\left\{ (x,q(x))\in\Omega \times S\right\}$ denote the graph of $q$.
Definitions and properties of Shimura varieties can be found in \cite[\S 5]{Mil05}. 
The $G(\R)$-conjugacy class $X$ is not an algebraic set, but each connected component $\Omega \subset X$ can be given the structure of a Hermitian symmetric domain and has a realization as an open immersion into its dual compact Hermitian symmetric manifold $\widehat{\Omega}$ of compact type through the Borel embedding. 
By the Harish-Chandra embedding theorem, we can realize $\Omega$ as a bounded domain on some Zariski open subset $\mathbb{W}$ of $\widehat{\Omega}$, with $\mathbb{W}$ biregular to $\C^N$, where $N$ is the dimension of $S$.
We will henceforth identify $\mathbb{W}$ with $\C^N$ and write $\Omega \subset \C^N$ for the Harish-Chandra realization.

In this article we study the following problem, sometimes known as the \emph{existential closedness problem} (ECP) for $q$: find a minimal set of geometric conditions that an algebraic variety $V\subset \C^{N}\times S$ has to satisfy in order for $V$ to have a Zariski dense set of points in the graph of $q$. 
We will present results on this question in two families of varieties. 
By the Ax--Schanuel Theorem for Shimura varieties \cite{Mok19}, when $\dim V=N$, the dimension of each irreducible component $U$ of $V\cap E_q$ is zero unless the natural projection of $U\subset (\Omega\times S)$ to $\Omega$ lies in a proper \emph{weakly special subvariety} $Y\subsetneq\Omega$ (definition in \S5.1). 

Our first main result gives geometric conditions under which the intersection $V\cap E_{q}$ is not only non-empty, but is as large as it could be (i.e.~Zariski dense in $V$). 

\begin{thm} \label{thm:ZariskiDenseIntersection}
	Let $V \subset \C^{N} \times S$ be an irreducible algebraic variety and let $\pi\colon \C^{N} \times S \to \C^{N}$ denote the projection. 
 If $\pi(V)$ is Zariski dense in $\C^{N}$, then $\pi(V \cap E_q)$ is Zariski dense in $\C^N$, and $V \cap E_q$ is Zariski dense in $V$.
\end{thm}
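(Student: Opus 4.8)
The plan is to concentrate all of the existential content into a single non-emptiness statement, deduce the two density assertions from it formally, and then attack non-emptiness analytically.

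\textbf{Reduction to a non-emptiness lemma.} It suffices to prove: \emph{for every irreducible locally closed subvariety $W\subseteq\C^{N}\times S$ with $\pi(W)$ Zariski dense in $\C^{N}$ there is an $x\in\Omega$ with $(x,q(x))\in W$.} Indeed, applying this to $W=V\setminus\pi^{-1}(Y)$ for an arbitrary proper closed $Y\subsetneq\C^{N}$ (for which $\pi(W)=\pi(V)\setminus Y$ is still Zariski dense) produces $(x,q(x))\in V$ with $x\notin Y$, so $\pi(V\cap E_{q})$ is Zariski dense in $\C^{N}$; and applying it to $W=V\setminus Z$ for an arbitrary proper closed $Z\subsetneq V$ — here $\pi(V\setminus Z)$ is still Zariski dense because for generic $x$ the fibre $\pi^{-1}(x)\cap V$ has dimension $\dim V-N$ while $\pi^{-1}(x)\cap Z$ has strictly smaller dimension — produces a point of $(V\cap E_{q})\setminus Z$, so $V\cap E_{q}$ is Zariski dense in $V$. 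In the lemma one may further assume $\dim W=N$ and $\pi|_{W}$ dominant and generically finite: when $\dim W=N+m$ with $m>0$, cut $W$ by $m$ generic hyperplanes; by Bertini the result is irreducible of dimension $N$, still projects dominantly to $\C^{N}$, and is contained in $W$.

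\textbf{Set-up for non-emptiness.} Now $\pi|_{W}\colon W\to\C^{N}$ is a dominant generically finite map, which after shrinking we may take to be a $d$-sheeted covering of a Zariski-open $U_{0}\subseteq\C^{N}$. I would pull $W$ back along $q$, forming the analytic set $\widetilde W=\{(x,y)\in\C^{N}\times\Omega:(x,q(y))\in W\}$ of dimension $N$; it carries the $\Gamma$-action $\gamma\cdot(x,y)=(x,\gamma y)$ with quotient $W$, and — using the \emph{surjectivity} of $q$ — its first projection $\widetilde W\to\C^{N}$ has image $\pi(W)$, hence is dominant. A point of $W\cap E_{q}$ is precisely a point of $\widetilde W$ on the diagonal $\{x=y\}$, and the two have complementary dimensions, $\dim\widetilde W+\dim\{x=y\}=2N=\dim(\C^{N}\times\Omega)$, so one expects a non-empty intersection. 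Concretely, $U_{0}\cap\Omega$ is non-empty, connected and simply connected (because $\Omega$ is a bounded symmetric domain, hence convex and contractible, and $\Omega\setminus U_{0}$ has complex codimension $\geq 1$), so over it $W$ splits into holomorphic sheets $f_{1},\dots,f_{d}\colon U_{0}\cap\Omega\to S$, each lifting through $q$ to $\widetilde f_{i}\colon U_{0}\cap\Omega\to\Omega$; the lemma then amounts to finding $i$, an element $\gamma\in\Gamma$, and a fixed point of the holomorphic self-map $\gamma^{-1}\circ\widetilde f_{i}$ of an open piece of $\Omega$.

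\textbf{The analytic core, and the main obstacle.} Turning this dimension count into an actual fixed point is the heart of the matter, and the step I expect to be hardest. The ingredients I would marshal are: (a) $\Omega$ is a \emph{bounded convex} domain, hence Kobayashi hyperbolic and subject to fixed-point results of Earle–Hamilton type and to the Denjoy–Wolff dichotomy (a holomorphic self-map of $\Omega$ either has a fixed point or its iterates escape to $\partial\Omega$); (b) every fibre $q^{-1}(s)$ is \emph{Zariski dense in $\C^{N}$}, since $\Gamma$ is Zariski dense in $G$ by the Borel density theorem and the orbit maps are algebraic, so no $\Gamma$-orbit is confined to a proper subvariety of $\C^{N}$; (c) the \emph{transcendence} of $q$, i.e. the Zariski density of $E_{q}$ in $\C^{N}\times S$, which rules out any algebraic sheet $\widetilde f_{i}$ coinciding identically with the diagonal branch of $q$ and constrains the boundary behaviour of the relevant self-maps. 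Ingredients (b) and (c) are also exactly what one uses in the reduction step to pass from a single point to Zariski density. The delicate point is making the fixed-point/intersection argument genuinely produce a solution rather than an escape to $\partial\Omega$ — in particular controlling the regions of $\Omega$ that map near the cusps of $S$ when $S$ is non-compact, where a $\Gamma$-orbit can have arbitrarily large gaps. Branch loci of $q$ and of $\pi|_{W}$ are proper analytic subsets and can be removed at the outset.
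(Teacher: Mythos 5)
Your reduction to a non-emptiness statement and the cut-down to $\dim W=N$ with $\pi|_W$ generically finite are fine and match the shape of the paper's argument (which handles $\dim V>N$ by the same device, following \cite{aslanyan-kirby-mantova}). But the ``analytic core'' is not a proof: it is a list of candidate tools together with an acknowledgement that you do not know how to make them produce a fixed point rather than an escape to $\partial\Omega$. That missing step is the entire content of the theorem. The tools you name would not close it as stated: Earle--Hamilton requires the image of the self-map to lie strictly inside the domain, and Denjoy--Wolff applies to self-maps of all of $\Omega$, whereas your maps $\gamma^{-1}\circ\widetilde f_i$ are only defined on $U_0\cap\Omega$; Borel density gives Zariski density of a $\Gamma$-orbit, which says nothing about where the orbit accumulates in the Euclidean topology, and it is Euclidean accumulation at a controllable boundary point that one actually needs. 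There is also a false auxiliary claim: $U_0\cap\Omega$ is generally \emph{not} simply connected (removing a complex-codimension-one set from a contractible domain destroys simple connectivity, as for $\mb{D}\setminus\{0\}$), so your global single-valued sheets $f_1,\dots,f_d$ need not exist; one must work locally, which the paper does via the inverse function theorem at smooth points of $V$.

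The mechanism the paper uses, and which is absent from your proposal, is localization at the \emph{Shilov boundary} $S(\Omega)$. Three facts do the work: $S(\Omega)$ is Zariski dense in $\C^N$ (so it meets the open set over which your local sections $p$ are defined, and points found near it are automatically Zariski dense in $\C^N$); for any $x\in\Omega$ the closure of $\Gamma x$ contains all of $S(\Omega)$; and hyperbolic balls of fixed radius shrink uniformly to a point as their centers approach $S(\Omega)$ (proved by comparing the Bergman and Carath\'eodory metrics). Given $Z_0\in S(\Omega)$ in the domain of a local section $p$, one picks $Z_1$ with $q(Z_1)=p(Z_0)$, a small ball $B$ around $Z_1$ on which $\|q-q(Z_1)\|\ge\delta$ on $\partial B$, and translates $\gamma_k B\to Z_0$; once $\gamma_k B$ lies where $\|p-p(Z_0)\|<\delta/2$, the several-variable Rouch\'e theorem forces a zero of $q-p$ in $\gamma_k B$. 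This replaces your fixed-point dichotomy entirely and is why the proof never has to rule out escape to the boundary: the solutions are deliberately manufactured at the boundary, on the Shilov part where the geometry is uniform. Without some substitute for this (or an actual execution of your fixed-point scheme), the proposal does not establish the theorem.
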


Although we will deal mostly with pure Shimura varieties in this article, we are also able to combine Theorem \ref{thm:ZariskiDenseIntersection} with the an analogous result for the exponential maps of abelian varieties proven in \cite{aslanyan-kirby-mantova}. 
We are able to obtain a result for those mixed Shimura varieties which are families of abelian varieties, see Corollary \ref{cor:mixed}.

\subsection{Broad and free varieties}
For our second main result, instead of relying on a dominant projection to the first set of coordinates, we look at varieties of the form $L \times W$, where $L$ is a ``subvariety'' of $\Omega$ and $W$ is a subvariety of $S$.
By an \emph{irreducible subvariety of} $\Omega \subset\mathbb{C}^{N}$, we mean a complex-analytically irreducible component of the intersection of $\Omega$ with an algebraic subvariety of $\mathbb{C}^{N}$. 
In order to state our second main result, we need to set-up some definitions. 
We follow the conventions of \cite{Ull11} for defining weakly special subvarieties and \cite{Ull18} for totally geodesic subvarieties (we will recall some of these aspects in \textsection\ref{subsec:totallygeodesic}).

\begin{defn}\label{defn:decomposition of Shimura Variety}
    For every decomposition of $G^{\ad} = G_1 \times G_2$ into a product of two normal $\Q$-subgroups (possibly trivial), we get a splitting of the Hermitian symmetric domain $\Omega = \Omega_1 \times \Omega_2$.
    Let $g_i \coloneqq G \to G^{\ad} \to G_i$ denote the projections.
    Given a compact open subgroup $K$ of $G(\mb{A}_f)$, let $K_i \coloneqq g_i(K)$, which is a compact open subgroup of $G_i(\mb{A}_f)$. 
    Thus, by setting $\Gamma \coloneqq K\cap G(\mathbb{Q})_+$ and $\Gamma_i\coloneqq K_i\cap G_i(\mathbb{Q})_+$, we obtain projections $S \coloneqq \Gamma\backslash\Omega\to\Gamma_i\backslash\Omega_i \eqqcolon S_i$, which then induce projections $p_i \colon \Omega \times S \to \Omega_i \times S_i$.

    We say that a variety $V\subset \mb{C}^{N}\times S$ is \emph{broad} if $\dim p_i(V) \ge \dim S_i$ for every such splitting of $G^{\ad}$. 

    A subvariety $W \subset S$ is said to be \emph{free} if there does not exist a proper weakly special subvariety $S' \subset S$ such that $W \subset S'$.
    A subvariety $Z\subset\mathbb{C}^{N}$ is \emph{free} if $\overline{q(Z\cap\Omega)}^{\mathrm{Zar}} = S$ (cf \cite[Theorem 1.2]{Ull18}). 
    
    Furthermore, we say that an algebraic variety $V\subseteq\mb{C}^{N}\times S$ is \emph{free} if the projections of $V$ to $\mb{C}^{N}$ and $S$ are both free. 
\end{defn}

We can now state our second main result. 

\begin{thm}
\label{thm:ProductDenseImage}
	Let $L \subset \Omega$ be a totally geodesic subvariety and let $W \subset S$ be an algebraic variety such that $L\times W$ is a free and broad variety. 
 Then, the intersection $W \cap q(L)$ is Euclidean dense inside $W$. \end{thm}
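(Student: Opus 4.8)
The plan, following the strategy that \cite{Gal21} used for the $j$-function and abelian exponentials, is to reduce to a statement local to a single generic point of $W$, feed in the ergodic density of the totally geodesic $L$ to obtain a dense family of translates of $L$, use the numerical content of broadness to make a suitable intersection transverse, and invoke functional transcendence to rule out the degenerate case. Concretely, write $L = F(\R)^{+}\cdot x_{0}$ with $F\leq G$ a connected semisimple $\R$-subgroup and $x_{0}\in\Omega$; here Hodge-genericity of the pair $(L,W)$ says that $F(\R)^{+}$ is not contained in $H(\R)$ for any proper $\Q$-subgroup $H\leq G$, and that $W$ is not contained in a proper weakly special subvariety of $S$. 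Since each $\Gamma$-translate $\gamma L$ is again totally geodesic with $q(\gamma L)=q(L)$, and since Euclidean density of $W\cap q(L)$ in $W$ can be tested over a dense open subset of the smooth locus of $W$, it suffices to fix such a generic smooth point $w_{0}\in W$, a lift $\tilde w_{0}\in\Omega$ with $\tilde W:=q^{-1}(W)$ smooth there, and to show: for every $\epsilon>0$ there is $\gamma\in\Gamma$ with $\gamma L$ meeting $\tilde W$ inside the ball $B(\tilde w_{0},\epsilon)$ — applying $q$ to such an intersection point then produces a point of $W\cap q(L)$ within $\epsilon$ of $w_{0}$.

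The first main ingredient is the ergodic density of $L$. Applying Ratner's orbit closure theorem to the orbit $\Gamma F(\R)^{+}\subseteq\Gamma\bs G(\R)^{+}$ (using that $F$ is semisimple, so $F(\R)^{+}$ is generated by one-parameter unipotents up to a compact factor), and then using that the intermediate group produced is defined over $\Q$ by arithmeticity and Borel density, the closure of this orbit is $\Gamma H(\R)^{+}$ for a $\Q$-subgroup $F(\R)^{+}\leq H(\R)^{+}\leq G(\R)^{+}$; Hodge-genericity forces $H=G$. Hence $\Gamma F(\R)^{+}$ is dense in $G(\R)^{+}$ (equivalently, $q(L)$ is Euclidean dense in $S$). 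The point of this is that if, for some $g_{0}\in G(\R)^{+}$, the translate $g_{0}L$ meets $\tilde W$ in $B(\tilde w_{0},\epsilon)$ at a point where the intersection is stable under small perturbations of $g_{0}$, then the set of such $g\in G(\R)^{+}$ is open and nonempty, hence meets the dense set $\Gamma F(\R)^{+}$, and any $\gamma f$ in this intersection gives the sought $\gamma L$.

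So the heart of the matter is to produce such a $g_{0}$. By broadness one has $\dim L+\dim W\geq\dim\Omega$ — and, what is really needed, the analogous inequalities over every quotient of $\Omega$ attached to the normal sub-data of $(G,X)$, which is what allows the argument to descend through the factors rather than being run only over $\Omega$. Among the translates of $L$ passing through $\tilde w_{0}$ itself, namely the $g_{0}L$ with $g_{0}x_{0}=\tilde w_{0}$, there is an isotropy-group's worth of choices, whose tangent spaces at $\tilde w_{0}$ range over the orbit of the $\ell$-plane $T_{x_{0}}L$ under the (conjugated) isotropy representation. If some such plane is transverse to $T_{\tilde w_{0}}\tilde W$, then — using exactly the inequality $\dim L+\dim W\geq\dim\Omega$ — the intersection $g_{0}L\cap\tilde W$ near $\tilde w_{0}$ is a transverse, hence stable, intersection, and we are done as above.

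The main obstacle is ruling out the possibility that no translate of $L$ through $\tilde w_{0}$ is transverse to $\tilde W$, and that this persists as $\tilde w_{0}$ ranges over a dense subset of $\tilde W$. In that case the tangent bundle of $\tilde W$ satisfies a nontrivial algebraic incidence condition relative to the above orbit of $\ell$-planes along a Zariski-dense set, and I expect this to force $\tilde W$ into a proper totally geodesic subvariety, equivalently $W$ into a proper weakly special subvariety, contradicting Hodge-genericity. Making this precise is where the real work concentrates: one keeps the locus of bad configurations definable by restricting $q$ to a fundamental domain, where it is definable in an o-minimal structure by Peterzil--Starchenko, reasons about it via cell decomposition and finiteness of connected components, and ultimately appeals to the Ax--Lindemann theorem for Shimura varieties — a consequence of the Ax--Schanuel theorem of \cite{Mok19} — together with the full combinatorial strength of broadness across the sub-data that intervene. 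Once transversality is secured, the density statement follows by the argument of the first two paragraphs, and since $w_{0}$ was an arbitrary generic point of $W$, the intersection $W\cap q(L)$ is Euclidean dense in $W$.
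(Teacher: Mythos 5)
Your overall architecture agrees with the paper's: approximate an arbitrary translate $gL$ through a lift of a generic point $w_{0}\in W$ by translates $\gamma fL=\gamma L$ using the Ullmo--Yafaev/Ratner density of $\Gamma F(\R)^{+}$ in $G(\R)^{+}$, and then argue that the intersection with $W$ persists under this approximation. But the step you yourself flag as ``where the real work concentrates'' is a genuine gap, and the route you sketch would not close it. What the persistence argument actually requires is not transversality of $T(g_{0}L)$ and $T\tilde W$, but only that the intersection point be \emph{isolated} (of the expected dimension): the paper first cuts $W$ by hyperplanes so that $\dim L=\mathrm{codim}\,W$ (preserving broadness and Hodge-genericity), writes $W$ locally as the zero set of $d$ functions $p_{1},\dots,p_{d}$, and applies the several-variable Rouch\'e theorem to $P(z)=(p_{j}(q(gz)))_{j}$ and its perturbations $P_{i}(z)=(p_{j}(q(\gamma_{i}f_{i}z)))_{j}$; an isolated zero of $P$, even a degenerate (tangential) one, has positive local degree and therefore survives. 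Insisting on transversality is strictly stronger than needed, and your proposed dichotomy --- ``no transverse translate along a dense set forces $W$ into a proper weakly special subvariety'' --- is not something Ax--Lindemann or any functional-transcendence statement can deliver: those theorems control the \emph{dimensions} of intersection components, not tangency of tangent spaces, and an everywhere-tangential but isolated intersection pattern is invisible to them.

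The correct dichotomy, and the one the paper proves, is dimension-theoretic: by the full Ax--Schanuel theorem of Mok--Pila--Tsimerman (not Ax--Lindemann, which does not apply to the intersection $q(L')\cap W$ for a translate $L'$ of $L$), any component of $q(L')\cap W$ of atypically large dimension must be contained in a proper weakly special subvariety of $S$. One must then exclude that alternative, and this is where the quantitative content of broadness is actually spent: for each splitting $G^{\mathrm{ad}}=G_{1}\times G_{2}$ the fiber-dimension theorem, the inequality $\dim p_{2}(L)+\dim p_{2}(W)\geq\dim S_{2}$, and the homogeneity of $L$ (all fibers of $L$ over a factor have equal dimension, since $L$ is a group orbit) combine to show the putative atypical component cannot exist at points of a suitable Euclidean-dense subset $Z\subset W$ obtained by removing countably many bad loci (one for each splitting and each proper special subvariety). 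This is the content of the paper's Lemma \ref{lem:NoInclusion}, and it replaces entirely the o-minimality/cell-decomposition program you gesture at. Without an argument of this kind, your proof establishes density of $q(L)$ in $S$ but not the existence of a single stable intersection point with $W$, which is the heart of the theorem.
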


Given the conjecture we formulate in the next subsection, we further expect $(L\times W)\cap E_q$ to be Zariski dense in $L\times W$.

\subsection{Background on ECP problems}
Theorems \ref{thm:ZariskiDenseIntersection} and \ref{thm:ProductDenseImage} follow an increasing body of work studying ECP for different maps arising in arithmetic geometry. 
Originally formulated in Zilber's work on pseudoexponentiation \cite{Zil05}, ECP began as a question about the complex exponential function ($\exp$). 
It has since been adapted to other functions. 
Results analogous to Theorem \ref{thm:ZariskiDenseIntersection} are given in \cite{Mar06}, \cite{Man16}, \cite{brownawell-masser}, \cite{daquino-exp}, \cite{aslanyan-kirby-mantova} for $\exp$, in \cite{aslanyan-kirby-mantova} for the exponential of algebraic tori, of any abelian variety, and of split semi-abelian varieties; in \cite{Ete21} for the $j$-function, and in \cite{eterovic-padgett} for the $\Gamma$ function. 
On the other hand, Gallinaro considered varieties like the ones in Theorem \ref{thm:ProductDenseImage} in the context of the $j$-function \cite{Gal21}, the exponentials of abelian varieties \cite{gallinaro-abelian}, and the complex exponential function \cite{gallinaro-exp}.  
This last result improves earlier results of Zilber \cite{zilber2015theoryexponentialsums} and was then used in \cite{gallinaro-kirby} to prove quasiminimality of certain model-theoretic structures.

There are conjectural answers to ECP for all the functions mentioned above, and we now extend naturally those conjectures to the setting of Shimura varieties.
\begin{conj}
\label{conj:ecp}
Let $V \subset \C^{N} \times S$ be an irreducible algebraic variety which is broad and free. Then $V \cap E_q$ is Zariski dense in $V$.
\end{conj}
The condition in Theorem \ref{thm:ZariskiDenseIntersection} that the projection $\pi(V)$ be Zariski dense in $\C^{N}$ is stronger than broadness.

\subsection{Structure of the article}

In \textsection \ref{sec:HSD} we go over some basic facts of Hermitian symmetric domains and their various embeddings as well as metrics defined on them. 
Then in \textsection \ref{sec:boundary} we cover the structure of the boundary of $\Omega$ and focus our discussion on the Shilov boundary. 
We prove that the $\Gamma$-orbit of any point in $\Omega$ contains a Zariski dense subset of the Shilov boundary in its closure. 
With this result, we are ready to present the proof of Theorem \ref{thm:ZariskiDenseIntersection} in \textsection \ref{sec:proofDenseImage}. 
We define weakly special and totally geodesic subvarieties of $\Omega$, the notions of broad and free, and present some consequences from Ratner theory in \textsection \ref{sec:WeaklySpecial} which will allow us to prove Theorem \ref{thm:ProductDenseImage} in \textsection \ref{sec:proofProduct}. 

\subsection*{Acknowledgements} We wish to thank the anonymous referees for their valuable comments that helped improve the presentation of this article.

\section{Shimura Varieties and Hermitian Symmetric Domains}\label{sec:HSD}
\subsection{Shimura Varieties}
We briefly review the theory of Shimura varieties. 
More detailed information can be found in \cite{Mil05}.

A Shimura datum is a pair $(G, X)$ where $G$ is a reductive algebraic group over $\Q$ and $X$ is a $G(\R)$-conjugacy class of homomorphisms $h: \mb{S} \to G_\R$ from the Deligne torus $\mb{S} := \text{Res}_{\C/\R}\mb{G}_m$ such that one (and hence all) $h \in X$ satisfies the following three conditions:

\begin{enumerate}[(1)]
    \item The Hodge structure on the Lie algebra of $G_\R$ given by $\text{Ad} \circ h$ is of type $\{(-1, 1), (0, 0), (1, -1)\}$,
    \item the adjoint action $\text{Ad} h(i)$ is a Cartan involution on the adjoint group $G^{\ad}_\R$,
    \item the adjoint group $G^{\ad}$ has no $\Q$-factor on which the projection of $h$ is trivial.
\end{enumerate}

When these three conditions are satisfied, then $X$ has the structure of a disjoint union of Hermitian symmetric domains. 
Let $\mb{A}_f$ denote the ring of finite ad\`eles of $\Q$ and let $K$ be a compact open subgroup of $G(\mb{A}_f)$. 
The Shimura variety attached to the triple $(G, X, K)$ is the double quotient
\[\text{Sh}_K(G, X) := G(\Q) \bs X \times \left(G(\mb{A}_f)/K\right),\]
where $G(\Q)$ acts on both $X$ by conjugation and $G(\mb{A}_f)/K$ by left multiplication. 
When $K$ is sufficiently small, this double coset space has the structure of a smooth quasi-projective variety over $\C$, and moreover has a canonical model over a number field.
 
Let $\Omega$ denote a connected component of $X$ and let $\mathcal{C}$ be a set of representatives for the double coset $G(\Q)_+ \bs G(\mb{A}_f) / K$. 
Then our Shimura variety can be described as a disjoint union of quotients of Hermitian symmetric domains
\[\text{Sh}_K(G, X) \simeq \sqcup_{g \in \mathcal{C}} \Gamma_g \bs \Omega,\]
where $\Gamma_g = gKg^{-1} \cap G(\Q)_+$ is an arithmetic subgroup of $G(\R)$.

We limit ourselves to looking at the quotient map associated with exactly one of these connected components. 
Fix a connected component $\Omega$ of $X$ and fix $\Gamma \subset G(\Q)_+$ an arithmetic subgroup of $G(\R)$ associated with a $g \in \mathcal{C}$ and compact open subgroup $K \subset G(\mb{A}_f)$, and let $S := \Gamma \bs \Omega$ denote the connected component of $\text{Sh}_K(G, X)$ corresponding to $\Omega$ and $\Gamma$, and $q: \Omega \to S$ the quotient map from modding out by the left action of $\Gamma$.

\subsection{Borel and Harish-Chandra embeddings}
The quotient $S$ is a connected component of the full Shimura variety and thus has the structure of a smooth algebraic variety. 
In order to discuss varieties of $\Omega \times S$, we need to give the $G(\R)^+$-conjugacy class of homomorphisms $\Omega$ algebraic structure.

Every point $h \in \Omega$ is a homomorphism $h: \mb{S} \to G_\R$, and to each we can attach a cocharacter of $G_\C$ by taking
\[\mu_h: \mb{G}_{m, \C} \to \mb{S}_\C \to G_\C\]
where the first map takes $z \mapsto (z, 1) \in \mb{S}_\C = \C^\times \times \C^\times$, and the second map is induced by $h$. 
This cocharacter gives rise to a filtration $\text{Filt}(\mu)$ of $\textbf{Rep}_\C(G_\C)$. 
This functor gives a decreasing filtration $F^\bullet$ on $V$ for each representation $(V, \xi)$ of $G_\C$ where $F^pV = \oplus_{i \ge p}V^i$ and $V^i = \{v \in V : \mu(z)v = z^{-i}v\}$. 
The compact dual Hermitian space $\widehat{\Omega}$ to $\Omega$ is the $G_\C$-conjugacy class of filtrations of $\textbf{Rep}_\C(G_\C)$ that contains $\text{Filt}(\mu)$.

Let $P_\mu \subset G_\C$ be the subgroup fixing the filtration $\text{Filt}(\mu)$ of $\textbf{Rep}_\C(G_\C)$. 
It is a parabolic subgroup of $G_\C$ and so the bijection $G(\C)/P_\mu(\C) \to \widehat{\Omega}$ endows $\widehat{\Omega}$ with the structure of a smooth projective complex variety. 
There is a natural embedding $\Omega \hookrightarrow \widehat{\Omega}$ given by sending a homomorphism $h \in \Omega$ to the filtration associated with it $\text{Filt}(\mu_h)$. 
Fix a base point $o \in \Omega$ and let $K_o \subset G(\R)^+$ be the subgroup fixing the homomorphism $o$ and let $P_o := P_{\mu_o}$ be the parabolic subgroup associated with $\mu_o$. 
Then we can identify $\Omega$ and $\widehat{\Omega}$ with the coset spaces $G(\R)^+/K_o$ and $G(\C)/P_o(\C)$ respectively. 
Additionally $K_o = G(\R)^+ \cap P_o(\C)$, and the Borel embedding can be seen as the natural map
\[\Omega = G(\R)^+/K_o \hookrightarrow G(\C)/P_o(\C) = \widehat{\Omega}.\]
In our situation, we are more interested in the Harish-Chandra embedding. 
Let $\mf{p}^+$ denote the holomorphic tangent space of $\widehat{\Omega}$ at $o$. 
It is proven in \cite[Theorem 2.5.6]{Str06} that $\Omega$ can be realized in $\mf{p}^+ \simeq \C^N$ as a bounded symmetric domain. 
Moreover, this tangent space can be embedded $\mf{p}^+ \to \widehat{\Omega}$ as a dense open subset in the compact dual of $\Omega$. 
In this way, we will view $\Omega$ as a bounded symmetric domain inside $\C^N$ for $N = \dim \mf{p}^+ = \dim \Omega$, and the algebraic structure on $\Omega$ will be induced from $\C^N$. 
In fact, this gives a bijection between Hermitian symmetric domains and bounded symmetric domains. 
A more detailed discussion can be found in \cite[Chapter VIII]{Hel79}

\begin{exmp}
	If $(G, \Omega) = (\mathrm{SL}_2, \mb{H})$, then for any choice of base point $o \in \Omega$, the stabilizer $P_o$ is a subgroup of $\mathrm{SL}_2(\C)$ conjugate to $B := \left\{\begin{pmatrix}
		a & b \\ 0 & a^{-1}
	\end{pmatrix}: a \in \C^\times, b \in \C\right\}$ and so $\widehat{\Omega} \simeq \mb{P}^1(\C)$ and $\mf{p}^+ \simeq \C$. The Harish-Chandra embedding realizes $\Omega$ as the unit disk $\mb{D} \subset \C$.
\end{exmp}

We define an algebraic subvariety of $\Omega \subset \C^N$, which is an analytic but not an algebraic space, as the restriction to $\Omega$ of an algebraic variety of $\C^N$.
\begin{defn}
	A subset $Y \subset \Omega$ is an \emph{irreducible algebraic variety of $\Omega$} if there exists an algebraic variety $Z \subset \C^N$ such that $Y$ is an irreducible component of the complex analytic subvariety $\Omega \cap Z\subset\Omega$. 
 A subset $Y \subset \Omega$ is called an \emph{algebraic subvariety} if $Y$ is a complex analytic subvariety of $\Omega$ with finitely many irreducible branches and each of the irreducible components of $Y$ is an irreducible algebraic subvariety of $\Omega$.  
\end{defn}

\begin{conv}
    Although the arithmetic group $\Gamma$ may not be torsion-free, we can find a finite index subgroup $\Gamma'\leq \Gamma$ so that $\Gamma'$ is torsion-free. 
    Set $S':=\Gamma'\backslash\Omega$ and $S:=\Gamma\backslash\Omega$, and denote the corresponding maps $q':\Omega\to S'$ and $q:\Omega\to S$. 
    We then get a natural finite algebraic map $S'\to S$, which in turn induces a finite algebraic map $\xi:\C^N\times S'\to \C^N\times S$.

    Given an algebraic variety $V\subset \C^N\times S$ and a component $V'$ of $\xi^{-1}(V)$, we get that $\xi\left(V'\cap E_{q'}\right)\subset V\cap E_q$. 
    Therefore, when proving Theorems \ref{thm:ZariskiDenseIntersection} and \ref{thm:ProductDenseImage} we may and will assume that $\Gamma$ is torsion-free. 
    This is a common and usually harmless convention to make when working with Shimura varieties, and for us it will be used explicitly in the proof of Proposition \ref{prop:InfinitelyManySolutions}. 
\end{conv}

\subsection{Metrics on domains}
We will be interested in a comparison between different notions of distance on $\Omega$ and so we briefly recall different metrics that can be defined on a general open domain $D \subset \C^n$. 
In addition to the Euclidean metric, there also exists the Bergman and Carath\'eodory metrics on a domain $D \subset \C^n$. We first give a description of the Bergman metric, which can also be found in more detail in \cite[Chapter VIII]{Hel79}.

For a domain $D \subset \C^n$, let $H(D)$ denote the space of holomorphic functions $f: D \to \C$ and let
\[H^2(D) := \left\{f \in H(D) : \int_D |f(z)|^2dz < \infty\right\}\]
denote the subspace of holomorphic functions with bounded $L^2$ norm. 
The space $H^2(D)$ is a Hilbert space for the $L^2$ norm $\la f, f\ra = \int_D |f(z)|^2dz$. 
For each fixed $w \in D$, the Riesz representation theorem implies the linear functional $f \mapsto f(w)$ is representable by some function $K(\cdot, w) \in H^2(D)$, so that $\la f, K(\cdot, w) \ra = f(w)$ for all $f \in H^2(D)$. 
The function $K(z, w)$ is defined on $D \times D$, it is holomorphic in $z$ and anti-holomorphic in $w$, satisfies $K(z, w) = \overline{K(w, z)}$, and is called the Bergman kernel of $D$.

\begin{defn}
	For each $z \in D$ and tangent vector $\textbf{v} = \sum_{i = 1}^n a_i \frac{\bdy}{\bdy z_i} \in T_z(D)$, the \emph{Bergman metric} is
	\[B_D(z; \textbf{v})^2 := \sum_{i, j = 1}^n a_i\overline{a_j} \frac{\bdy^2}{\bdy z_i \bdy \overline{z_j}} \log K(z, z).\]
\end{defn}

From this metric, comes the notion of Bergman distance.

\begin{defn}
	For each pair of points $z, w \in D$, the \textit{Bergman distance} is
	\[b_D(z, w) := \inf_\gamma \int_0^1 B_D(\gamma(t); \gamma'(t))dt,\]
	where the infimum is taken over all piecewise $C^1$ curves $\gamma: [0, 1] \to D$ with $\gamma(0) = z$ and $\gamma(1) = w$.
\end{defn}

\begin{exmp}
	If $D = \mb{H} \subset \C$ is the upper half plane, then the Bergman metric corresponds to the Poincar\'e metric $B_D(z)^2 = \frac{dzd\bar{z}}{y^2}$.
	
	If $D = \mb{D} \subset \C$ is the unit disk, the Bergman metric is given by $B_D(z) = \frac{ds}{1 - |z|^2}$, where $ds$ denotes the Euclidean metric.
\end{exmp}

An important property of the Bergman distance is that it is invariant under holomorphic automorphisms. 
For Shimura varieties, on the Hermitian symmetric domain $\Omega$ each $g \in G(\R)_+$ acts as a holomorphic automorphism of $\Omega$ \cite{Mil05}. 
Thus, for each $g \in G(\R)_+$ and $z, w \in \Omega$, the Bergman distance $b_\Omega(z, w) = b_\Omega(gz, gw)$ is invariant under the action of $g$.

The Carath\'eodory--Reiffen metric introduced by Reiffen \cite{Rei63} is another biholomorphic metric that can be defined on bounded domains. 
While we are primarily interested in the Bergman metric, the Carath\'eodory metric will help bound the Bergman metric and is easier to work with.

Let
\[H^\infty(D) := \left\{f \in H(D) : \sup_{z \in D} |f(z)| < \infty \right\}\]
denote the subspace of holomorphic functions with bounded $L^\infty$ norm.
\begin{defn}
	For each $z \in D$ and tangent vector $\textbf{v} = \sum_{i = 1}^n a_i \frac{\bdy}{\bdy z_i} \in T_z(D)$, the \emph{Carath\'eodory metric} is
	\[C_D(z; \textbf{v}) := \sup \left\{\textbf{v}f : f \in H^\infty(D), f(z) = 0, \|f\|_\infty \le 1\right\}.\]
\end{defn}

As with the Bergman metric, we define the Carath\'eodory distance between two points using the Carath\'eodory metric. Note that this is different from the definition of the Carath\'eodory distance given in \cite[\S4]{Kra08}, which defines the Carath\'eodory distance in terms of the Poincar\'e distance. However, we will only need results about the Carath\'eodory metric and length, and not use results on the Carath\'eodory distance of \cite{Kra08}.

\begin{defn}
	For each pair of points $z, w \in D$, the \textit{Carath\'eodory distance} is
	\[c_D(z, w) := \inf_\gamma \int_0^1 C_D(\gamma(t); \gamma'(t))dt,\]
	where the infimum is taken over all piecewise $C^1$ curves $\gamma: [0, 1] \to D$ with $\gamma(0) = z$ and $\gamma(1) = w$.
\end{defn}

\begin{exmp}
	If $D = \mb{D}$, then the Carath\'eodory metric coincides with the Bergman metric and Poincar\'e metric.
\end{exmp}

There is a simple relation between the Carath\'eodory and Bergman metrics, in that the Bergman metric is at least the Carath\'eodory metric.

\begin{thm}[\cite{Hah78}]
    In any domain $D \subset \C^n$, for every point $z \in D$ and tangent vector $\textbf{v} \in T_z(D)$, we have $C_D(z; \textbf{v}) \le B_D(z; \textbf{v})$.
\end{thm}

\section{Boundaries of Hermitian Symmetric Domains} \label{sec:boundary}
\subsection{Shilov boundary}
A fundamental result from complex analysis says that a bounded domain $D \subset \C$ satisfies the maximum modulus principle: a holomorphic function on $\overline{D}$ achieves its maximum modulus on the boundary of $D$. 
In higher dimensions though, we can sometimes state a stronger result in that the maximum modulus must occur on a proper closed subset of the boundary of $D$.
\begin{defn}
	Let $D \subset \C^n$ be a bounded domain.
 The \emph{Shilov boundary} of $D$ is the smallest closed subset $\sigma(D) \subset \bdy D$ that satisfies the maximum modulus principle. 
 That is, for any holomorphic function defined in an open neighborhood of the closure $\overline{D}$, the function $|f(z)|$ achieves a maximum at some point $z \in \sigma(D)$.
\end{defn}

\begin{exmp}
	If $D = \mb{D} \times \mb{D} \subset \C^2$, then the Shilov boundary is $S^1 \times S^1$. 
 It satisfies the maximum modulus principle by applying the single variable maximum modulus principle in the first coordinate and then the second coordinate. 
 It is a proper closed subset of $\bdy D = (\mb{D} \times S^1) \sqcup (S^1 \times \mb{D}) \sqcup (S^1 \times S^1)$.
\end{exmp}

\begin{prop}\label{prop:ShilovDense}
	Let $D \subset \C^n$ be a bounded domain. Then $\sigma(D)$ is Zariski dense in $\C^n$.
\end{prop}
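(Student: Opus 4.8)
The goal is to show that the Shilov boundary $S(D)$ of a bounded domain $D \subset \C^n$ is Zariski dense in $\C^n$. The plan is to argue by contradiction: if $S(D)$ were not Zariski dense, it would be contained in the zero locus $Z(P)$ of some nonzero polynomial $P \in \C[z_1,\dots,z_n]$. The strategy is then to manufacture a holomorphic function on $D$, continuous up to $\overline D$, whose modulus is not maximized on $Z(P) \cap \overline D$, contradicting the defining property of the Shilov boundary.

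The natural candidate is an exponential of a linear form. Since $P$ is nonzero, $Z(P)$ is a proper algebraic subset, hence nowhere dense, so there is a point $p \in D \setminus Z(P)$; more importantly there is a direction $v \in \C^n$ such that the affine complex line $\ell(t) = p + t v$ is not contained in $Z(P)$. First I would pick a real linear functional, or rather consider $f(z) = e^{\la z, a\ra}$ for a suitable vector $a \in \C^n$, where $\la z, a \ra = \sum a_i z_i$. This $f$ is entire, and $|f(z)| = e^{\operatorname{Re}\la z, a\ra}$, so $|f|$ is maximized on $\overline D$ exactly where the \emph{real} linear functional $\operatorname{Re}\la z, a\ra$ is maximized. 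Hence it suffices to choose the real-linear functional $\lambda(z) = \operatorname{Re}\la z, a\ra$ so that its maximum over $\overline D$ is attained only at points \emph{outside} $Z(P)$; then $|f|$ cannot achieve its max on $S(D) \subseteq Z(P)$, a contradiction.

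To arrange this, I would use convexity/compactness: $\overline D$ is compact, so for any nonzero real-linear functional $\lambda$ the maximum set $M_\lambda = \{z \in \overline D : \lambda(z) = \max_{\overline D}\lambda\}$ is a nonempty compact subset of $\bdy D$ (it is in $\bdy D$ since $D$ is open). The key point is that as $\lambda$ ranges over all directions, the union of the max-sets $M_\lambda$ is large — in fact it contains all extreme points of the closed convex hull $\operatorname{conv}(\overline D)$, and by Minkowski/Krein–Milman the extreme points are not contained in any proper real-affine, let alone complex-algebraic, subvariety (otherwise their convex hull, which is all of $\operatorname{conv}(\overline D)$ and hence has nonempty interior, would lie in a measure-zero set). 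So some $M_\lambda$ meets the complement of $Z(P)$; fix such a $\lambda$, write $\lambda(z) = \operatorname{Re}\la z, a\ra$, set $f = e^{\la z,a\ra}$, and obtain the contradiction.

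The main obstacle is the geometric input in the last step: ruling out that \emph{every} functional's maximum set lies inside $Z(P)$. The clean way is via extreme points of $\operatorname{conv}(\overline D)$ — since $D$ is open and nonempty, $\operatorname{conv}(\overline D)$ has nonempty interior in $\R^{2n}$, so its set of extreme points cannot be contained in the zero set of any nonzero polynomial (a proper real-algebraic subset has empty interior and the closed convex hull of a set contained in it would again be contained in it). Each extreme point is exposed or at least is in the closure of exposed points (Straszewicz's theorem), and exposed points are precisely maxima of linear functionals. One should be slightly careful that the exposing functional is real-linear and that $Z(P)$, cut out by a complex polynomial, is still a proper \emph{real}-algebraic subset — which it is, since a nonzero complex polynomial does not vanish identically. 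Assembling these observations gives the result without any hard analysis; the subtlety is purely in organizing the convex-geometry argument correctly.
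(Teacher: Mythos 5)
There is a genuine gap, and also a much shorter route that you missed. The direct argument is simply to apply the defining property of the Shilov boundary to the polynomial $P$ itself: $P$ is continuous on the compact set $\overline{D}$ and holomorphic on $D$, so $|P|$ attains its maximum over $\overline{D}$ at some point of $S(D)$. If $S(D)\subset Z(P)$, that maximum is $0$, so $P\equiv 0$ on $\overline{D}$, hence on the nonempty open set $D$, hence $P=0$ identically. This is exactly the paper's proof. Your instinct to manufacture an auxiliary function $e^{\langle z,a\rangle}$ is unnecessary here: the polynomial you are trying to avoid is already the test function you need.

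The gap in your convex-geometry route is the assertion that the extreme points of $\operatorname{conv}(\overline{D})$ cannot all lie in a proper real-algebraic subset. This is false: for the unit ball $D=\{z:\|z\|<1\}$, the extreme points of $\operatorname{conv}(\overline{D})$ are exactly the sphere $\{\|z\|=1\}$, which is the zero set of the real polynomial $\|z\|^2-1$. Your justification is also incorrect as stated --- the convex hull of a measure-zero set can certainly have nonempty interior (the ball is the convex hull of the sphere), so ``their convex hull would lie in a measure-zero set'' does not follow. What you actually need is that the extreme points cannot all lie in the zero set of a nonzero \emph{complex} polynomial, and that is true but requires a further argument (e.g.\ that a complex hypersurface has real codimension $2$ while the relevant exposed-point sets are too large, or a totally-real tangency argument as for the torus in $\C^2$); nothing of the sort is supplied. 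There is also a smaller logical slip: you need the maximum set $M_\lambda$ to be \emph{disjoint} from $Z(P)$, whereas you only establish that some $M_\lambda$ \emph{meets} the complement of $Z(P)$; this is repairable by passing to exposed points (where $M_\lambda$ is a singleton), but only once the previous, genuinely false step is replaced.
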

\begin{proof}
	Suppose for contradiction that $\sigma(D) \subset Z(f)$ was in the zero set of some non-zero polynomial $f$. 
 Then $f(z) = 0$ for all $z \in \sigma(D)$.
 By definition, the function $|f(z)|$ restricted to $\overline{D}$ achieves a maximum at some $z \in \sigma(D)$, so the function must vanish on all of $\overline{D}$. 
 The function $f: \C^n \to \C$ vanishes on an open set in $\C^n$ and hence on all of $\C^n$, so $f = 0$, contradicting our assumption on $f$. 
 Therefore $\sigma(D)$ does not lie in any proper variety of $\C^n$ and is Zariski dense.
\end{proof}

In the case of $D$ being a Hermitian symmetric domain, the boundary has a well-studied decomposition. The boundary inherits an action of $\text{Hol}(D)$. Writing $G$ for the identity component of $\text{Hol}(D)$, after partitioning the boundary of $D$ into disjoint $G$-orbits, each $G$-orbit is either a disjoint union of positive-dimensional boundary components, which are Hermitian symmetric domain of smaller dimension, or it is the unique closed orbit \cite[Part I.5]{Wol72}. This unique closed orbit is precisely the Shilov boundary of $D$.

\begin{exmp}
	If $D = \mb{D} \times \mb{D} \subset \C^2$, then the boundary of $D$ can be split up as $\bdy D = (\mb{D} \times S^1) \sqcup (S^1 \times \mb{D}) \sqcup (S^1 \times S^1)$. 
 The first two components consist of disjoint unions of Hermitian symmetric domains of smaller dimension, and the unique closed orbit is the Shilov boundary $S^1 \times S^1$.
\end{exmp}

\subsection{Behavior near Shilov boundary}
In the case of the upper half-plane, as the imaginary part of a point $z \in \mb{H}$ tends to zero, a hyperbolic ball of fixed radius centered at $z$ converges uniformly in the Euclidean metric to a real point. 
In the general case of a Hermitian symmetric domain of higher dimension though, this is not necessarily true. 
While it still holds that as we take the center $z \in D$ of a hyperbolic ball of fixed radius to a point on the boundary $\bar{z} \in \bdy D$, the Euclidean volume of the ball will go to $0$ and all the points in the ball will converge to the boundary \cite{Ber88}, but the points in the ball may not all converge to the same point on the boundary. 
For instance, taking the center $(z, w) \in D = \mb{D} \times \mb{D}$ of a hyperbolic ball to a point on the boundary $(\bar{z}, w) \in S^1 \times \mb{D} \subset \bdy D$ by keeping the second coordinate fixed and only changing the first coordinate makes the points in the ball converge to a hyperbolic ball $\{\bar{z}\} \times B \subset S^1 \times \mb{D}$ in a smaller Hermitian domain. 
However, if the center converges to a point on the Shilov boundary, the hyperbolic ball centered there will converge in the Euclidean metric to the same point on the Shilov boundary.

\begin{prop}\label{prop:LimitBehavior}
	Let $\Omega \subset \C^N$ be the bounded realization of a Hermitian symmetric domain. For any fixed real number $r \ge 0$ and $z' \in \sigma(D)$,
	\[\lim_{z \to z', z \in \Omega} \sup \{\|w - z\| : w \in \Omega, b_\Omega(z, w) < r\} = 0\]
\end{prop}

\begin{proof}
	The Shilov boundary corresponds to the set of points in $\overline{\Omega}$ of maximal Euclidean distance from the origin \cite[Theorem 6.5]{Loo77}. 
 Let the radius of $\Omega$ to be $R = \sup_{s \in \Omega} \|s\|$. Let $B_R \subset \C^N$ denote the ball of radius $R$ centered at the origin. 
 From the definition of the Carath\'eodory metric, we must have $C_{B_R}(z; \textbf{v}) \le C_\Omega(z; \textbf{v})$ for any $z \in \Omega \subset B_R$ and $\textbf{v} \in T_z(\Omega) = T_z(B_R)$.
 Moreover, the Carath\'eodory metric is less than the Bergman metric and hence
	\[C_{B_R}(z; \textbf{v}) \le C_\Omega(z; \textbf{v}) \le B_\Omega(z; \textbf{v}).\]
	For each $w \in \Omega$ with $b_\Omega(z, w) < r$, integrating along paths gives $c_{B_R}(z, w) \le b_\Omega(z, w) < r$. 
 The point $z' \in \sigma(\Omega)$ is on the Shilov boundary with $\|z'\| = R$ and so as $z \to z'$ we have $z \to \bdy B_R$ as well.
 The inequality $c_{B_R}(z, w) \le r$ means that $w$ is in a ball of fixed Carath\'eodory radius centered at $z$ and as $z \to \bdy B_R$, and hence $\|z - w\| \to 0$ (see paragraph after proof of \cite[Proposition 17]{Kra08}).
\end{proof}

\subsection{Limit sets of arithmetic subgroups}
The complex points of a connected component of a Shimura variety can be described as the quotient of a Hermitian symmetric domain $\Omega$ by the action of an arithmetic subgroup $\Gamma \subset G(\Q)_+$. 
We would like to know the behavior of $\overline{\Gamma x}$, the Euclidean closure of the $\Gamma$-orbit of some point $x \in \Omega$.
The group $\Gamma$ acts discretely on $\Omega$ and hence any limit points of $\Gamma x$ must lie on the boundary $\bdy \Omega$. 
In the case of the upper half-plane $\Omega = \mb{H}$, the arithmetic subgroup $\Gamma$ is a Fuchsian group of the first type and the limit set of the orbit of any point $\overline{\Gamma x}$ contains the boundary $\bdy \mb{H}$ and hence is equal to $\bdy \mb{H}$, which is the real line and the point at infinity. 
Again, by looking at the case of $\mb{D} \times \mb{D}$, this result does not hold in higher dimensions but as before, the Shilov boundary provides a suitable substitute.

\begin{prop}\label{prop:ShilovBoundaryLimit}
    Suppose $\Omega$ be a Hermitian symmetric domain acted on by an arithmetic subgroup $\Gamma \subset G(\Q)_+$.
    For any point $x \in \Omega$, the limit set of the $\Gamma$-orbit of $x$ contains $\sigma(\Omega) \subset \overline{\Gamma x}$.
\end{prop}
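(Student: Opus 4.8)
The plan is to combine the density of the Shilov boundary in the limit set of $\Gamma$ (which is the content of this proposition, but approached via the $\Q$-rank) with the convergence statement of Proposition \ref{prop:LimitBehavior}. Concretely, fix $x \in \Omega$ and a point $z' \in S(\Omega)$; I want to produce a sequence $\gamma_n \in \Gamma$ with $\gamma_n x \to z'$ in the Euclidean topology. Since $\Gamma$ is arithmetic in $G(\Q)^+$ and $G$ is semi-simple, $\Gamma$ acts on $\Omega$ with a fundamental domain of finite volume (it is a lattice), so $\overline{\Gamma x}$ is large; the real content is to see that it reaches every point of the Shilov boundary.

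First I would reduce to a statement about a single rational parabolic. The Shilov boundary $S(\Omega)$ is the unique closed $G(\R)^+$-orbit in $\bdy\Omega$, so $G(\R)^+$ acts transitively on it; pick $g \in G(\R)^+$ with $g \cdot s_0 = z'$ for a fixed base point $s_0 \in S(\Omega)$ lying in a rational boundary component. It then suffices to find $\gamma_n \in \Gamma$ approximating $g s_0$, and for this I would use the structure of the rational boundary components of $\Omega$ in the sense of Baily--Borel: each maximal rational parabolic $P$ of $G$ has an associated rational boundary component, and the union of the $\Gamma$-translates of the (closures of) these rational boundary components is exactly the set where the orbit closure can touch the boundary. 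The key geometric input is that when one drives a point $x$ toward the boundary along a suitable one-parameter subgroup inside the unipotent radical of a minimal rational parabolic, the limit lies on the Shilov boundary (this is essentially \cite[Theorem 6.5]{Loo77} together with the orbit decomposition \cite[Part~I.5]{Wol72} already cited), and that density of $\Gamma$ in the relevant unipotent/torus quotient lets one steer the limit to any prescribed point of that boundary component.

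So the concrete steps, in order, are: (i) identify $S(\Omega)$ with the closed $G(\R)^+$-orbit and fix a base point $s_0$ in the closure of a rational boundary component $F$ of a minimal rational parabolic $P = MAN$; (ii) use that $\Gamma \cap N$ is a lattice in $N$, together with diagonal elements $a_t \in A$, to produce $\gamma_n x$ lying in ever-smaller Bergman balls whose centers march off to $s_0$ — here Proposition \ref{prop:LimitBehavior} is what converts "centers go to a Shilov point" into "the whole orbit point converges", so one only needs $b_\Omega(\gamma_n x, c_n) \le r$ for a fixed $r$ and $c_n \to s_0$; (iii) use transitivity of $G(\R)^+$ on $S(\Omega)$ and approximation of $g \in G(\R)$ by elements of $\Gamma$ (strong approximation / density of $\Gamma$ in $G(\R)$ is false, but density of $\Gamma$ in $G(\A_f)$ via congruence plus real approximation along the cusp suffices) to move the limit from $s_0$ to the arbitrary target $z'$; (iv) conclude $z' \in \overline{\Gamma x}$, and since $z'$ was arbitrary, $S(\Omega) \subset \overline{\Gamma x}$.

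The main obstacle is step (iii): getting from "some point of the Shilov boundary is a limit of $\Gamma x$" to "every point of the Shilov boundary is such a limit". One cannot simply translate by $G(\R)^+$ because $\Gamma$ is not dense in $G(\R)$; the honest argument must exploit that $\Gamma$ is a lattice, so $\Gamma \backslash G(\R)^+$ has finite volume and $\Gamma$-orbits of boundary points are governed by the (finitely many) cusps, and then use that the closed orbit $S(\Omega)$ is a single $G(\R)^+$-orbit together with the mixing/equidistribution of horospherical flows (or, more elementarily, the minimality of the $\Gamma$-action on the relevant boundary piece coming from $\Q$-rank considerations) to spread the limit set over all of $S(\Omega)$. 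I expect this to be where the Carathéodory-metric estimate of Proposition \ref{prop:LimitBehavior} does its real work, since it is exactly what makes the orbit point track its Bergman-ball center uniformly as the center approaches the Shilov boundary, and hence lets a dense set of center-targets produce a dense — hence (being closed under the $G(\R)^+$-action on limits) all — of $S(\Omega)$.
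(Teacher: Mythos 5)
Your two-step skeleton (first produce \emph{one} point of $S(\Omega)$ in $\overline{\Gamma x}$, then spread it over all of $S(\Omega)$) is the same as the paper's, but the crucial second step is exactly where your proposal stops being a proof. You correctly observe that one cannot translate limits by $G(\R)^+$ because $\Gamma$ is not dense in $G(\R)$, and then you offer three substitutes --- density of $\Gamma$ in $G(\mb{A}_f)$ "plus real approximation along the cusp", equidistribution of horospherical flows, or "minimality of the $\Gamma$-action on the relevant boundary piece" --- without carrying out any of them; you yourself label this "the main obstacle". The one concrete mechanism you do assert, that the limit set is "closed under the $G(\R)^+$-action on limits", is false: $\overline{\Gamma x}$ is $\Gamma$-invariant, not $G(\R)^+$-invariant. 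The paper closes this gap with a short arithmetic argument you don't have: $S(\Omega)$ is the real locus of the projective flag variety $G/P$, so its $\Z$-points coincide with its $\Q$-points, the image of $\Gamma$ in $(G/P)(\R)$ is all of $(G/P)(\Q)$, and hence for any $z\in \overline{\Gamma x}\cap S(\Omega)$ one gets $\overline{\Gamma z}=\overline{(G/P)(\Q)\,z}=(G/P)(\R)\,z=S(\Omega)$. Some such rationality/density input is indispensable; "minimality from $\Q$-rank considerations" is not a proof of it, and in the $\Q$-anisotropic (cocompact) case your entire apparatus of rational parabolics, rational boundary components and lattices $\Gamma\cap N$ in unipotent radicals is unavailable, whereas the first step still has to be carried out.

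Two smaller points. For the first step the paper argues more cleanly than you do: discreteness plus boundedness of $\overline{\Omega}$ forces a limit point on $\partial\Omega$, and then induction on dimension through the orbit decomposition of the boundary (a boundary component of a boundary component is a boundary component, and images of arithmetic groups are arithmetic) pushes the limit point into the unique closed orbit $S(\Omega)$; your route via driving $x$ along a one-parameter subgroup "inside the unipotent radical of a minimal rational parabolic" would need justification that the limit actually lands on the Shilov boundary (this depends on the direction in the Weyl chamber, not on the unipotent radical), and is never made precise. Finally, Proposition \ref{prop:LimitBehavior} plays no role in this proposition --- it is used later, in Proposition \ref{prop:InfinitelyManySolutions}, to make Bergman balls shrink to a Shilov point; here one only needs Euclidean convergence of the orbit points themselves, so the weight you place on it is misplaced.
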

\begin{proof}
    By \cite[\S 3.6]{Ben97}, we know that there exists a point $y \in \sigma(\Omega) \cap \overline{\Gamma x}$.
    By \cite[Theorem 3.6]{KW65}, the action of $G(\R)_+$ extends to the boundary on which the Shilov boundary is the unique closed orbit. Moreover, it can be identified with $G(\R)/P(\R)$ for some parabolic subgroup $P$ (see remark after \cite[Theorem 5.9]{KW65}).
    Then \cite[Lemma 8.5]{Mos73} gives that $\overline{\Gamma y} = \sigma(\Omega)$, and the proposition follows from $\overline{\Gamma y} \subset \overline{\Gamma x}$.
\end{proof}

\section{Proof of Theorem \ref{thm:ZariskiDenseIntersection}}\label{sec:proofDenseImage}
Now that we have Propositions \ref{prop:LimitBehavior} and \ref{prop:ShilovBoundaryLimit}, the proof of Theorem \ref{thm:ZariskiDenseIntersection} proceeds, \textit{mutatis mutandis}, like the $j$-function case found in \cite{Ete21}.

\begin{thm}[{Rouch\'e's Theorem \cite[Theorem 2.5]{Yuz83}}]
	Let $D \subset \C^n$ be a bounded domain whose boundary $\bdy D$ is piecewise smooth and let $f, g: \bar{D} \to \C^n$ be two continuous maps, whose restriction to $D$ are holomorphic and whose zeroes are isolated. 
    If at each point $\textbf{x}$ in $\bdy D$, $\|f(\textbf{x})\| > \|g(\textbf{x})\|$, then $f$ and $f + g$ have the same number of zeroes in $D$, counting multiplicities.
\end{thm}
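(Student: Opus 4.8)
The plan is to deduce the multidimensional statement from the homotopy invariance of the topological degree, using the one genuinely complex-analytic input that a holomorphic map between equidimensional spaces is orientation preserving. Since $\bar D$ is compact and the zeros of the maps in play are isolated, each of $f$ and $f+g$ has only finitely many zeros in $D$; moreover the hypothesis $\|f\| > \|g\| \ge 0$ on $\partial D$ forces both $f$ and $f+g$ to be nonvanishing on $\partial D$, so none of these zeros lies on the boundary. Thus for a continuous $F \colon \bar D \to \C^n$ with $0 \notin F(\partial D)$ one may consider the Brouwer degree $\deg(F, D, 0)$, and the crux is the claim that when $F$ is holomorphic on $D$ with isolated zeros this degree is exactly the number of zeros of $F$ in $D$ counted with multiplicity; write $N(F)$ for that common value.

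To see $N(F) = \deg(F,D,0)$ I would argue locally. Choosing pairwise disjoint balls $B_a \subset D$ around the finitely many zeros $a$ of $F$, additivity of the degree gives $\deg(F,D,0) = \sum_a \deg(F, B_a, 0)$, so it suffices to show each $\deg(F, B_a, 0)$ equals the multiplicity of the zero $a$. But the real Jacobian of a holomorphic map $F$ satisfies $\det_{\R} DF = |\det_{\C} DF|^2 \ge 0$, so $F$ is orientation preserving; hence at any regular value $w$ near $0$ --- which exists by Sard's theorem --- every point of $F^{-1}(w) \cap B_a$ contributes $+1$, giving $\deg(F, B_a, 0) = \#\bigl(F^{-1}(w)\cap B_a\bigr)$, and by the local description of finite holomorphic maps this count is the intersection multiplicity $\dim_{\C}\mathcal O_a/(F_1,\dots,F_n)$, independent of the chosen regular value. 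Equivalently, and closer to the reference \cite{Yuz83}, one can express $N(F)$ as the logarithmic residue $\tfrac{(n-1)!}{(2\pi i)^n}\int_{\partial D} F^{*}\omega$ with $\omega$ the Bochner--Martinelli kernel on $\C^n\setminus\{0\}$, first on a slightly shrunk domain $D' \subset D$ and then passing to the limit, and the same orientation computation identifies it with the multiplicity count.

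With this established, introduce the homotopy $F_t := f + t g$ for $t \in [0,1]$. On $\partial D$ we have $\|F_t(\mathbf x)\| \ge \|f(\mathbf x)\| - t\|g(\mathbf x)\| \ge \|f(\mathbf x)\| - \|g(\mathbf x)\| > 0$, so $0 \notin F_t(\partial D)$ for every $t$ and $(t,\mathbf x) \mapsto F_t(\mathbf x)$ is an admissible homotopy from $f$ to $f+g$. Homotopy invariance of the Brouwer degree then yields $N(f) = \deg(F_0, D, 0) = \deg(F_1, D, 0) = N(f+g)$, which is exactly the conclusion. In the integral formulation one instead observes that $\|F_t\|^{2n}$ is bounded away from $0$ uniformly on the compact set $\partial D \times [0,1]$, so $F_t^{*}\omega$ depends continuously on $t$ and hence $t \mapsto N(F_t)$ is a continuous, integer-valued function on $[0,1]$, therefore constant.

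The only real obstacle is the ingredient identifying the local topological degree of a holomorphic map at an isolated zero with its algebraic multiplicity --- in particular that this local degree is a \emph{nonnegative} integer; this is precisely where holomorphicity is used, and it is also what pins down the meaning of ``counting by multiplicity'' in the statement so that it agrees with the degree-theoretic count. Everything else is soft: the triangle-inequality estimate on $\partial D$, additivity and homotopy invariance of the degree, and (in the integral version) the harmless point that ``continuous boundary'' can be replaced at the outset by an interior approximation $D' \subset D$ with smooth boundary, so that no regularity of $\partial D$ itself is needed.
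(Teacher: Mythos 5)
The paper gives no proof of this statement: it is imported verbatim from the literature with a citation to \cite{Yuz83}, so there is nothing in the text to compare your argument against line by line. Your degree-theoretic proof is correct and is essentially the standard modern proof of several-variable Rouch\'e. The two genuinely load-bearing points are exactly the ones you isolate: (i) the boundary estimates $\|f\|>0$ and $\|f+tg\|\ge\|f\|-\|g\|>0$ on $\bdy D$, which make the Brouwer degree of the linear homotopy $f+tg$ well defined and constant in $t$; and (ii) the identification of the local degree of a holomorphic map at an isolated zero with the algebraic multiplicity $\dim_{\C}\mathcal{O}_a/(F_1,\dots,F_n)$, which rests on the orientation computation $\det_{\R}DF=|\det_{\C}DF|^2$ together with Sard's theorem and the local structure of finite holomorphic maps; since all zeros lie in the open set $D$ where the maps are genuinely holomorphic, your excision to a slightly shrunk smooth subdomain disposes of the fact that $f,g$ are only continuous up to the boundary. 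The cited source proceeds instead through the logarithmic residue, i.e.\ the Bochner--Martinelli integral over $\bdy D$, and your remark that the two formulations agree --- and that the integral version is where the ``continuous boundary'' hypothesis earns its keep, whereas the purely topological version needs no boundary regularity --- is accurate. One small observation you could add for free: the assumption that the zeros of $f+g$ are isolated (which is not literally among the hypotheses, only the zeros of $f$ and $g$ are assumed isolated) is automatic, since the zero set of $f+g$ is an analytic subset of $D$ whose closure misses $\bdy D$, hence a compact analytic subvariety of $\C^n$, hence finite by the maximum principle applied to the coordinate functions; this closes the only gap in your phrase ``the zeros of the maps in play are isolated.''
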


\begin{prop}\label{prop:InfinitelyManySolutions}
	Let $\Gamma \subset G(\Q)_+$ be an arithmetic subgroup and let $q: \Omega \to S$ be the quotient map for $\Gamma$, let $U \subset \C^N$ be a Euclidean open set such that $U \cap \sigma(\Omega) \neq \emptyset$, and let $p: U \to S$ be a holomorphic map. 
 Then, the equation $q(Z) = p(Z)$ has infinitely many solutions with $Z \in U \cap \Omega$. 
 Moreover, the closure of the set of solutions contains $U \cap \sigma(\Omega)$.
\end{prop}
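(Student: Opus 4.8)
The plan is to reduce the equation $f(Z) = p(Z)$ to a fixed-point / zero-counting problem that can be attacked with Rouché's theorem, using the behavior of the uniformization near the Shilov boundary established in Propositions \ref{prop:LimitBehavior} and \ref{prop:ShilovBoundaryLimit}. First I would fix a point $z' \in U \cap S(\Omega)$ and work in a small Euclidean ball $B = B(z_0, \epsilon)$ centered at a point $z_0 \in \Omega$ that is very close to $z'$ (so $\bar B \subset U$); the aim is to produce a solution of $f(Z) = p(Z)$ inside $B \cap \Omega$, and then, by shrinking $\epsilon$ and letting $z_0 \to z'$, to conclude that $z'$ lies in the closure of the solution set and that there are infinitely many solutions. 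Since $\Gamma z'$ accumulates at every point of $S(\Omega)$ by Proposition \ref{prop:ShilovBoundaryLimit}, and more to the point the $\Gamma$-translates of any fixed point of $\Omega$ accumulate along the whole Shilov boundary, I can arrange that the relevant local picture is a translate of a neighborhood of a fixed interior point, where $f$ looks like a fixed submersion; this is where I would use that $f$ is automorphic and surjective.

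The key technical step is the following. Because $f$ is surjective and holomorphic, pick $a \in \Omega$ with $f(a) = p(z')$ (or rather $f(a)$ close to $p(z_0)$); by Sard's theorem (Theorem \ref{thm:SardTheorem}) I may choose $a$ to be a regular point of $f$, so that near $a$ the map $f$ has a holomorphic local section $s$ with $s(f(a)) = a$. Now use $\Gamma$-equivariance: there is a sequence $\gamma_n \in \Gamma$ with $\gamma_n a \to z'$, and since $f$ is automorphic, $f$ agrees with $s^{-1}$ transported by $\gamma_n$ on the shrinking neighborhoods $\gamma_n(\text{nbhd of } a)$. By Proposition \ref{prop:LimitBehavior}, a Bergman ball of fixed radius around $\gamma_n a$ has Euclidean diameter tending to $0$ as $\gamma_n a \to z' \in S(\Omega)$, so for large $n$ the image of a fixed small Bergman ball under $\gamma_n$ is contained in any prescribed Euclidean ball $B$ around $z_0$. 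On this tiny Euclidean neighborhood, define $F(Z) = f(Z) - p(Z)$ and compare it with $G(Z) = f(Z) - p(z_0)$: the difference $p(Z) - p(z_0)$ is uniformly small on $B$ (as $\epsilon \to 0$), while on $\partial(\gamma_n B')$ the map $G$ is, up to the biholomorphism $\gamma_n$, a fixed submersion minus a value in its image, hence has a zero inside and is bounded away from zero on the boundary of a suitable smaller ball. Rouché's theorem then guarantees $F$ has a zero in $B \cap \Omega$, i.e. a solution of $f(Z) = p(Z)$.

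Finally, infinitude and density: different choices of $\gamma_n$ (equivalently, different base points on $\Gamma a$ approaching possibly different points of $S(\Omega)$, or the same point from different directions) produce solutions in disjoint Euclidean balls, so there are infinitely many; and since every point of $U \cap S(\Omega)$ can be taken as the target $z'$, every such point lies in the closure of the solution set. The main obstacle I anticipate is the Rouché comparison itself: one must verify that the fixed local submersion $G$, after pulling back along the expanding-then-contracting family $\gamma_n$, has the required strict inequality $\|G\| > \|p(\cdot) - p(z_0)\|$ on the boundary of an appropriately scaled ball, uniformly in $n$ — this requires care because the biholomorphisms $\gamma_n$ distort Euclidean distances wildly near the boundary, so the argument must be phrased intrinsically (in terms of the Bergman metric, where $\gamma_n$ acts by isometries) and only converted to Euclidean balls at the very end via Proposition \ref{prop:LimitBehavior}. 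One also needs $\dim \Omega = N$ so that $f: \Omega \to S$ is a map between equidimensional complex manifolds and the multivariable Rouché theorem applies with $f$ replaced by a local chart of $S$ near $p(z')$.
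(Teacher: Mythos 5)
Your proposal follows essentially the same route as the paper: find a regular preimage $Z_1$ of $p(Z_0)$ via surjectivity and Sard, push a fixed Bergman ball around $Z_1$ toward the Shilov-boundary point by $\Gamma$-translates (Proposition \ref{prop:ShilovBoundaryLimit}), shrink it Euclideanly via Proposition \ref{prop:LimitBehavior}, and apply Rouch\'e to $f(Z)-p(Z_0)$ versus $p(Z_0)-p(Z)$; the distortion issue you flag as the main obstacle evaporates because automorphy makes the boundary values of $f$ on $\gamma_k B$ literally equal to those on $B$, which is exactly the paper's argument. The only point you gloss over is that Sard does not by itself let you pick a regular preimage of $p(Z_0)$ when $p(Z_0)\in f(A)$; the paper disposes of this with a short dichotomy (either $p$ is constant on $U\cap S(\Omega)$, handled directly, or one restricts to the dense set of $Z_0$ with $p(Z_0)\notin f(A)$), a routine patch to your outline.
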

\begin{proof}
	Fix some $Z_0 \in U \cap \sigma(\Omega)$. 
 The quotient map $q$ is surjective so choose $Z_1 \in \Omega$ such that $q(Z_1) = p(Z_0)$. 
 There exists a small Euclidean closed ball $B \subset \Omega$ around $Z_1$ such that $q(Z) \neq q(Z_1)$ for all $Z \in B \bs \{Z_1\}$ because $q$ is locally a diffeomorphism (since $\Gamma$ is torsion-free, by convention). 
 Moreover, since $\Gamma$ is a discrete subgroup, we may shrink $B$ so that $gB$ is disjoint from $B$ for all $g \in \Gamma$ unless $gZ_1 = Z_1$. 
 By Proposition \ref{prop:ShilovBoundaryLimit}, there exists a sequence $\{\gamma_{k}\}_k \in \Gamma$ such that $\|\gamma_{k} Z_1 - Z_0 \| \to 0$. 
 By taking a subsequence, we may assume that each $\gamma_{k} Z_1$, and hence $\gamma_k B$, is disjoint.
	
	Define $\delta := \min_{Z \in \bdy B} \|q(Z) - q(Z_1)\| > 0$.
 Using the continuity of $p$, choose a Euclidean open neighborhood $W \subset U$ of $Z_0$ satisfying
	\[Z \in W \implies \|p(Z) - p(Z_0)\| < \delta/2.\]
	The supremum $\sup_{Z \in \bdy B} b_D(Z, Z_1)$ is finite because $\bdy B$ is a compact set and hence $B$ lies within a ball of finite Bergman radius.
 Since the centers $\gamma_k Z_1$ tend towards $Z_0$ on the boundary of $\Omega$, Proposition \ref{prop:LimitBehavior} implies the translates $\gamma_{k} B$ also tend uniformly to $Z_0$, and so there exists some $N$ such that $\gamma_{k} B \subset W$ for all $k > N$.
 The function $q$ is invariant under $\Gamma$ action and thus for all $Z \in \bdy (\gamma_{k} B) \subset W$
	\[\|q(Z) - p(Z_0)\| = \|q\left(\gamma_{k}^{-1}Z\right) - q(Z_1)\| \ge \delta > \|p(Z_0) - p(Z)\|.\]
	The function $q(Z) - p(Z_0)$ has an isolated zero in $\gamma_k B$ at $Z = \gamma_k Z_1$ since $q(\gamma_k Z_1) - p(Z_0) = q(Z_1) - q(Z_1) = 0$. 
 The functions $q$ and $p$ are holomorphic on $\gamma_kB$ so Rouch\'e's Theorem applied to $q(Z) - p(Z_0)$ and $p(Z_0) - p(Z)$ says their sum $q(Z) - p(Z)$ also has a zero in $\gamma_{k} B$. 
 This holds for every $k > N$, giving infinitely many solutions to the system of equations $f(Z) = p(Z)$, one in each $\gamma_{k} B$, converging to $Z_0$.
 Moreover, the $\gamma_k$ were refined so that the $\gamma_k B$ are disjoint, meaning that the solutions are all distinct.
 Our initial choice of point $Z_0 \in U \cap \sigma(\Omega)$ was arbitrary and hence all of $U \cap \sigma(\Omega)$ must lie in the closure of the set of solutions.
\end{proof}

\begin{thm}[Inverse Function Theorem]
	Let $V$ be a complex manifold of dimension $n$ and $F: V \to \C^n$ a holomorphic function. 
 If $x \in V$ is a point such that the Jacobian at $x$ has rank $n$, then there is an open neighborhood $W$ of $x$ and a holomorphic inverse $G: F(W) \to W$ such that $F \circ G = \mathrm{id}_{F(W)}$ and $G \circ F = \mathrm{id}_W$.
\end{thm}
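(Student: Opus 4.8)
The plan is to reduce immediately to a local statement in $\C^n$ and then bootstrap the real inverse function theorem, upgrading smoothness to holomorphy by a Cauchy--Riemann argument. First I would choose a holomorphic chart $\varphi\colon U \to U' \subset \C^n$ around $x$ with $\varphi(x) = 0$; replacing $F$ by $F \circ \varphi^{-1}$, it suffices to treat the case $V = U' \subset \C^n$ open, $x = 0$, and $F\colon U' \to \C^n$ holomorphic with complex Jacobian matrix $J := DF_0 \in \mathrm{GL}_n(\C)$ --- note that for a square $n\times n$ matrix the hypothesis ``rank $n$'' is exactly invertibility.

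Next I would regard $F$ as a $C^\infty$ map between open subsets of $\R^{2n}$. Writing $F = (f_1,\dots,f_n)$ with $f_k = u_k + i v_k$, a standard computation shows that the real Jacobian determinant of $F$ at a point equals $|\det_{\C}(\partial f_j/\partial z_k)|^2$, which at $0$ is $|\det_{\C} J|^2 \neq 0$. Hence the real inverse function theorem provides open neighbourhoods $W \ni 0$ and $W'' \ni F(0)$ and a $C^\infty$ map $G\colon W'' \to W$ with $F \circ G = \mathrm{id}_{W''}$ and $G \circ F = \mathrm{id}_W$; shrinking $W$ we may assume $F(W) = W''$ and, by continuity of $z \mapsto \det_{\C} DF_z$, that $DF_y$ is invertible for every $y \in W$.

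It remains to check that $G$ is holomorphic. Differentiating $F \circ G = \mathrm{id}_{W''}$ via the chain rule gives $DG_z = (DF_{G(z)})^{-1}$ for each $z \in W''$. Since $F$ is holomorphic, $DF_{G(z)}$ is $\C$-linear, and the inverse of an invertible $\C$-linear map is again $\C$-linear; thus $DG_z$ is $\C$-linear, i.e.\ $G$ satisfies the Cauchy--Riemann equations at every point of $W''$. As $G$ is already $C^1$, this shows $G$ is holomorphic. Transporting back through $\varphi$ yields the holomorphic inverse on a neighbourhood of the original $x$.

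I do not expect a genuine obstacle: this is the classical holomorphic inverse function theorem. The only point needing (minor) care is the passage from a real smooth inverse to a holomorphic one, handled by the observation that $\C$-linearity passes to inverses; alternatively one could bypass the real theorem entirely and run the Banach contraction / successive-approximation argument directly with holomorphic iterates, whose locally uniform limit is automatically holomorphic by Weierstrass's theorem.
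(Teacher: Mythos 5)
Your proof is correct: the reduction to a chart, the identification of the real Jacobian determinant with $|\det_{\C} DF|^{2}$, and the upgrade of the smooth inverse to a holomorphic one via $\C$-linearity of $(DF_{G(z)})^{-1}$ are all standard and sound. Note, however, that the paper does not prove this statement at all --- it records the Inverse Function Theorem as a classical background result to be invoked in the proof of Theorem \ref{thm:ZariskiDenseIntersection} --- so there is no argument in the paper to compare yours against; yours is simply the textbook proof, and either it or your suggested alternative via holomorphic successive approximations would serve.
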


We will use the Inverse Function Theorem in conjunction with Proposition \ref{prop:InfinitelyManySolutions} to prove Theorem \ref{thm:ZariskiDenseIntersection}.

\begin{proof}[Proof of Theorem \ref{thm:ZariskiDenseIntersection}]
First, we restrict ourselves to the case when $\dim V=N$, in which $\pi: V \to \C^N$ is a quasi-finite map with a Zariski dense image. 
The projection $\pi$ is a regular map and Chevalley's theorem implies the image of $\pi$ is a constructible set: a finite union of intersections of Zariski open and closed sets. 
Since the image is Zariski dense, we may make a further reduction to the case when the image of $\pi$ is a Zariski open set of $\C^N$.
	
Let $\mathrm{Reg}(V)$ denote the smooth locus of $V$, and let $V'\subseteq\mathrm{Reg}(V)$ denote the non-empty Zariski open subset	such that the restriction of $\pi$ to $V'$ is a local biholomorphism.
 Let $U:=\pi(V')$.
 Given $x\in V'$ and $z\in U$ satisfying $\pi(x)=z$, there exists a small Euclidean neighborhood $W$ of $x$ in $\C^N\times S$ such that $\phi:=\pi\upharpoonright_{V'\cap W}:V'\cap W\to\C^N$ is a biholomorphism onto an open subset $\pi(W)\subset\C^N$. 
 Hence, there exists an inverse holomorphic map $\psi:\pi(W)\to V'\cap W$.
 We also know that $U \cap \sigma(\Omega) \neq \emptyset$ because the latter is Zariski dense by Proposition \ref{prop:ShilovDense}. 

 Now fix $x\in V'$ with $z=\pi(x) \in U \cap \sigma(\Omega)$. 
 We can compose $\psi$ with the projection down to $S$ to obtain a holomorphic map $\pi(W) \to S$.
 Proposition \ref{prop:InfinitelyManySolutions} gives infinitely many solutions which are points in $E_q \cap V' \cap W$, and the closure of $\pi(E_q \cap V)$ contains $U \cap \sigma(\Omega)$.
 Since $U\cap\sigma(\Omega)$ is Zariski dense in $\C^N$, we conclude that $V\cap E_q$ is Zariski dense in $V$.
Indeed, if $V \cap E_q$ were not Zariski dense in $V$, then its Zariski closure would have dimension smaller than $\dim V = N$ and so the Zariski closure of $\pi(E_q \cap V)$ would have dimension smaller than $N$, a contradiction. 
	
For the general case when $\dim V > N$, suppose for the sake of argument by contradiction that $\overline{V \cap E_q}^{Zar} = W \subsetneq V$ is not Zariski dense in $V$.
 Let $V_1 = V \cap \C^N \times H$, where $H \subset S$ is an intersection of $\dim V - N$ hyperplanes chosen generically so that $V_1$ is irreducible, free, with dominant projection onto $\C^N$, and $V_1 \not \subset W$.
 Then $\dim V_1 = N$ and the above proof shows that $V_1 \cap E_q$ is Zariski dense in $V_1$. 
 But since $V_1 \cap W$ is a proper subvariety of $V_1$, there are elements of $V_1 \cap E_q$, and hence $V \cap E_q$ not lying in $W$.
 This contradicts the definition of $W$ and hence $V \cap E_q$ is Zariski dense in $V$.
\end{proof}

So far, we have only considered pure Shimura varities, but using the results of \cite{aslanyan-kirby-mantova} we can get some results for those mixed Shimura varieties which are families of abelian varieties. 
We refer the reader to \cite[Construction 2.9 and Example 2.12]{pink} for the details of the constructions.
Suppose $(G,\Omega)$ is the connected Shimura datum associated with a moduli space $p:\Omega\to S = \Gamma\bs\Omega$ of principally polarized abelian varieties of dimension $g$ (with a fixed level structure). Then, we can embed $G$ into $\mathrm{GSp}_{2g}$.

Let $V:= \Q^{2g}$ and $V_{\R}:=V\otimes_{\mathbb{Q}}\R$. 
Interpreting $V$ as a rational representation of $G$, we see that the semidirect product $G\ltimes\mathbb{G}_{a,\Q}^{2g}$ is a linear algebraic group. 
The product space $\Omega\times V_{\R}$ has an action of $G(\R)^{+}\ltimes\mathbb{G}_{a,\Q}^{2g}(\R)$ given by
\[(g,v)\cdot(x,v'):=(gx, gv'+v),\]
and to evidence this fact we use the notation $\Omega\ltimes V_{\R}$. 
Just like we can realise $\Omega$ as a subset of $\mathbb{C}^{N}$ for an appropriate $N$, so can $\Omega\ltimes V_{\R}$ be realised as an open semi-algebraic subset of a complex algebraic variety $\left(\Omega\ltimes V_{\R}\right)^{\vee}$, see \cite[\textsection 2.5]{gao}. 

Given a congruence subgroup $\Gamma$ of $G(\Q)^{+}$ and a $\Gamma$-invariant $\mathbb{Z}$-lattice $\Gamma_{V}$ of $V$ of rank $g$, the quotient $q:\Omega\ltimes V_{\R}\to B:=(\Gamma\ltimes\Gamma_{V})\bs\left(\Omega\ltimes V_{\R}\right)$ has the structure of an algebraic variety.
Let $E_{q}\subset \left(\Omega\ltimes V_{\R}\right)\times B$ be the graph of $q$.
The variety $B$ comes equipped with a morphism of Shimura varieties $\alpha:B\to S$ so that every fiber of $\alpha$ is an abelian variety of dimension $g$.
We then get the commutative diagram:
\begin{equation*}
    \begin{CD}
    \Omega\ltimes V_{\R} @>q>> B\\
    @V\pi VV @VV\alpha V\\
    \Omega @>p>> S
    \end{CD}
\end{equation*}
From this we also get a map $(\Omega\ltimes V_{\R})\times B\to\Omega\times S$ given by $((x,v),b)\mapsto (x,\alpha(b))$. 
    Let $W\subset \left(\Omega\ltimes V_{\R}\right)^{\vee}\times B$ be an algebraic variety, and suppose that there is a point of the form $(z,p(z))$ in the image of $W$ in $\Omega^{\vee}\times S$. 
The set $W_z:=\{((z,v),b)\in W : \alpha(b)=p(z)\}$ is the fiber in $W$ over $(z,p(z))$ and it determines a subvariety of $\C^g\times A_{p(z)}$, where $A_{p(z)}$ denote the fiber in $B$ over $p(z)$, so $A_{p(z)}$ is an abelian variety of dimension $g$. 
By a small abuse of notation, we will still denote the subvariety of $\C^g\times A_{p(z)}$ by $W_z$. 

\begin{cor}
\label{cor:mixed}
    With the notation as above, suppose that $W\subset \left(\Omega\ltimes V_{\R}\right)^{\vee}\times B$ is an irreducible algebraic variety such that the projection $W\to \Omega^{\vee}$ is Zariski dense.
    If the fibers in $W$ over points of the form $(z,p(z))\in\Omega^{\vee}\times S$ generically have a dominant projection to $\C^g$ (that is, outside of some proper Zariski closed subset of the image of $W$ in $\Omega^\vee\times S$), then $W\cap E_{q}$ is Zariski dense in $W$. 
\end{cor}
\begin{proof}
    Let $W'$ be the intersection of the image of $W$ in $\Omega^\vee\times S$ with $\C^N\times S$. 
    By hypothesis, $W'$ has a dominant projection to $\C^N$, so by Theorem \ref{thm:ZariskiDenseIntersection} we get that the intersection $W'\cap E_p$ is Zariski dense in $W'$. 

    We now remark that by intersecting with generic hyperplanes, we may assume that $\dim W = N+g$ while preserving the geometric conditions stated in the corollary (see \cite[Lemma 4.30]{aslanyan:adequate} and  \cite[Proposition 2.33]{kirby-semiab}). 
    Indeed, we may first intersect with generic hyperplanes until the dimension of the generic fibers of $W\to\Omega^{\vee}\times S$ is $g$, while still keeping the domination of $W'$ to $\C^N$ as well as the domination to $\C^g$ of the fibers above points of the form $(z,p(z))$.
    We can now intersect the image of $W$ in $\Omega^{\vee}\times S$ with generic hyperplanes until the dimension is $N$, and then lift these conditions to $W$. 
    
    Given $(z,p(z))\in W'\cap E_p$, we get a corresponding variety $W_z\subseteq\C^g\times A_{p(z)}$. 
    By hypothesis, outside of some (possibly empty) proper Zariski closed subset of $W'$, the points $(z,p(z))$ are such that the projection of $W_z$ to $\C^g$ is also dominant, so by \cite[Theorem 1.4]{aslanyan-kirby-mantova}, $W_z$ has a Zariski dense subset of points in the graph of $\exp:\C^{g}\to A_{p(z)}$. 
    These points can be combined with $(z,p(z))$ to produce points in $W\cap E_q$, which all together have a Zariski closure of dimension at least $N+g$.
    Since we have reduced to the case where $\dim W =N+g$, this proves the result. 
\end{proof}

One can also construct mixed Shimura varieties which are families of semiabelian varieties (see \cite[Remark 2.13]{pink}), and the proof of Corollary \ref{cor:mixed} adapts easily.
However, an analogue of \cite[Theorem 1.4]{aslanyan-kirby-mantova} for semiabelian varieties is not available yet, except for the trivial split case (see \cite[\S 6]{aslanyan-kirby-mantova}).

\section{Products of Varieties}\label{sec:WeaklySpecial}
\subsection{Totally geodesic subvarieties}
\label{subsec:totallygeodesic}
In this section, we will define totally geodesic subvarieties following \cite{Ull11} and then state some results from \cite{Ull18} on these subvarieties of $\Omega$ and their image in $S$.
\begin{defn}
	Let $(G, X)$ be Shimura datum and let $\Omega$ be a connected component of $X$.
 Let $(H, X_H)$ by a sub-Shimura datum of $(G, X)$.
 For suitable choices of compact open subsets $K$, this gives a finite map $S_H \to S$ of Shimura varieties. 
 The irreducible components of the Hecke orbits of $S_H$ are called \emph{special} subvarieties of $S$.
 Let $\Omega_H$ denote an irreducible component of $X_H$ mapping to $\Omega$.
 For each decomposition $H^{\mathrm{\ad}} \cong H_1 \times H_2$, we get a corresponding decomposition of Hermitian symmetric domains $\Omega_H = \Omega_{H, 1} \times \Omega_{H, 2}$.
 For each point $y_2 \in \Omega_2$, the image of $\Omega_1 \times \{y_2\}$ in $S$ is a \emph{weakly special} subvariety of $S$.
 The subvariety $\Omega_1 \times \{y_2\}$ is called a \emph{weakly special} subvariety of $\Omega$.
\end{defn}

There is a more general form of weakly special subvarieties called \emph{totally geodesic subvarieties} of $\Omega$, which we will be using.
The definition of totally geodesic subvarieties using geodesics would be too much of a detour for us, so instead the reader may regard Proposition \ref{prop:geodesics} as their definition.
We remark that we will diverge from the terminology used by Ullmo and Yafaev in \cite{Ull18} when referring to subvarieties of $\Omega$.
What they refer to as weakly special subvarieties, we follow the terminology of \cite{Moo98} and call them totally geodesic. 
We reserve the notion of weakly special subvarieties for totally geodesic subvarieties that are bi-algebraic.
Interested readers can also see \cite[Definition 2.2.1]{Str06}.
 
\begin{prop}[{\cite[Proposition 2.3]{Ull18}}]
\label{prop:geodesics}
	A subvariety $Z \subset \Omega$ is totally geodesic if and only if there exists a semi-simple real algebraic subgroup $F \subset G_\R$ without compact factors and some $x \in \Omega$ such that $h_x: \C^\times \to G(\R)$ factors through $FZ_G(F)^\circ$ and $Z = F(\R)^+x$.
\end{prop}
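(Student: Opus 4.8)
The plan is to deduce this from the classical Cartan correspondence between totally geodesic submanifolds of a Riemannian symmetric space and Lie triple systems in the isotropy representation, and then to promote the resulting Lie-algebraic data to an algebraic $\R$-subgroup with the stated properties. (In the write-up one may simply cite the relevant statement in \cite{Ull18}; the argument below is the shape of the proof.) Throughout, fix $x\in\Omega$, realise $\Omega = G(\R)^+/K_x$ with Cartan decomposition $\mathfrak g = \mathfrak k\oplus\mathfrak p$ and Cartan involution $\theta$ attached to $x$, so that $T_x\Omega = \mathfrak p$ with complex structure $J_x = \mathrm{Ad}(h_x(\sqrt{-1}))|_{\mathfrak p}$.

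For the ``if'' direction, assume $F\subset G_\R$ is semisimple without compact factors, and that $h_x$ factors through $FZ_G(F)^\circ$. The hypothesis on $h_x$ makes $\mathfrak f := \mathrm{Lie}(F)$ both $\theta$-stable (hence $\mathfrak f = (\mathfrak f\cap\mathfrak k)\oplus(\mathfrak f\cap\mathfrak p)$) and $J_x$-stable. Since $F$ has no compact factors, $F(\R)^+x \cong F(\R)^+/(F(\R)^+\cap K_x)$ is a Hermitian symmetric subdomain, and the inclusion of symmetric spaces attached to a $\theta$-stable semisimple subalgebra is totally geodesic (for instance because $\mathfrak f\cap\mathfrak p$ is automatically a Lie triple system; cf.\ Helgason, Ch.\ IV). Hence $Z = F(\R)^+ x$ is a totally geodesic complex submanifold of $\Omega$.

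For the ``only if'' direction, let $Z$ be totally geodesic and take $x\in Z$. Then $\mathfrak m := T_xZ\subset\mathfrak p$ is a Lie triple system, i.e.\ $[[\mathfrak m,\mathfrak m],\mathfrak m]\subset\mathfrak m$, which is the classical description of totally geodesic submanifolds through a point. Put $\mathfrak f := [\mathfrak m,\mathfrak m]\oplus\mathfrak m$; this is a $\theta$-stable subalgebra with $[\mathfrak f,\mathfrak f]=\mathfrak f$, hence semisimple, with Cartan decomposition $\mathfrak f = [\mathfrak m,\mathfrak m]\oplus\mathfrak m$. Let $F\subset G_\R$ be the connected subgroup with Lie algebra $\mathfrak f$. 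Its compact factors lie in the $\mathfrak k$-part $[\mathfrak m,\mathfrak m]$, so they fix $x$ and do not enlarge the orbit; discarding them, we may assume $F$ has no compact factors while still $Z = F(\R)^+x$. Finally, because $Z$ is a complex submanifold, $J_x$ preserves $\mathfrak m$ and hence $\mathfrak f$; unwinding $J_x = \mathrm{Ad}(h_x(\sqrt{-1}))$ shows $h_x(\mb{S})$ normalises $F$ and acts trivially on the part of $\mathfrak g$ not moving the $F$-orbit, which is exactly the assertion that $h_x$ factors through $FZ_G(F)^\circ$.

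The delicate points, and where I expect the main obstacle to lie, are: (i) algebraicity and $\R$-rationality — upgrading the real Lie subalgebra $\mathfrak f$ to a genuine semisimple algebraic $\R$-subgroup of $G_\R$ (using that semisimple Lie subalgebras of the Lie algebra of an algebraic group are algebraic), and simultaneously removing the compact factors without destroying either the identity $Z = F(\R)^+x$ or the $\theta$- and $J_x$-stability; and (ii) extracting the precise condition ``$h_x$ factors through $FZ_G(F)^\circ$'' from compatibility of $Z$ with the complex structure, which requires care with connected components and with the centraliser $Z_G(F)$. Step (i) is the one I expect to be hardest to make fully rigorous.
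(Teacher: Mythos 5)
The paper does not prove this proposition: it is quoted verbatim from Ullmo--Yafaev \cite{Ull18}, who in turn rely on Moonen's analysis of totally geodesic subvarieties, so there is no in-paper argument to compare against. Your sketch follows exactly that standard route (Cartan's correspondence between totally geodesic submanifolds and Lie triple systems, then Chevalley's theorem to algebraize the resulting semisimple Lie subalgebra), and the overall architecture is right, including the two places you flag as delicate.

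There is, however, one genuine gap beyond the points you flag. In the ``only if'' direction you assert that $\mathfrak f := [\mathfrak m,\mathfrak m]\oplus\mathfrak m$ satisfies $[\mathfrak f,\mathfrak f]=\mathfrak f$ and is ``hence semisimple.'' Neither half of this is automatic. First, a Lie triple system can have a flat part: if $X\in\mathfrak m$ satisfies $[X,\mathfrak m]=0$ then $X\notin[[\mathfrak m,\mathfrak m],\mathfrak m]$, perfectness fails, and $\mathfrak f$ acquires a central summand (for a totally real geodesic, e.g.\ a single geodesic line, $\mathfrak f$ is abelian). What saves you is precisely the complex structure, which you invoke only at the very end: since $Z$ is a complex submanifold and the holomorphic sectional curvature of a bounded symmetric domain is bounded away from $0$, one has $\langle[[X,J_xX],J_xX],X\rangle\neq 0$ for every $0\neq X\in\mathfrak m$, which kills the flat part; so $J_x$-stability of $\mathfrak m$ must be established \emph{before} the semisimplicity claim, not after. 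Second, even granting perfectness, ``perfect $\Rightarrow$ semisimple'' is false in general; here you need that $\mathfrak f$ is $\theta$-stable, so the Killing form of $\mathfrak g$ restricts nondegenerately to $\mathfrak f$, making $\mathfrak f$ reductive in $\mathfrak g$, and only then does perfect imply semisimple. Finally, for step (ii) the clean statement you want is that for semisimple $F$ one has $N_G(F)^{\circ}=F\cdot Z_G(F)^{\circ}$ (all derivations of $\mathfrak f$ are inner), so that $\mathrm{Ad}(h_x)$-stability of $\mathfrak f$ immediately gives the factorization of $h_x$ through $FZ_G(F)^{\circ}$; your phrase about ``acting trivially on the part of $\mathfrak g$ not moving the $F$-orbit'' is not the right condition. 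With these repairs the argument matches the one in the literature you would be reconstructing.
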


\begin{exmp}
	Totally geodesic subvarieties $Z \subset \mb{H}^n$ are also called \emph{M\"obius subvarieties} and they are cut out by equations of the form $x_i = gx_j$ for some $g \in \SL_2(\R)$ or $x_i = c$ for some $c \in \mb{H}$. 
 The totally geodesic subvariety $Z = \{(\tau, g\tau, c) : \tau \in \mb{H}\} \subset \mb{H}^3$ for fixed $g \in \SL_2(\R)$ and $c \in \mb{H}$ corresponds to the real algebraic subgroup $F(\R) := \{(h, ghg^{-1}, 1) : h \in \SL_2(\R)\}$ and $Z = F(\R) \cdot (i, gi, c)$.
	
	These M\"obius subvarieties are weakly special subvarieties precisely when all the equations of the form $x_i = gx_j$ used in their definition satisfy $g \in \mathrm{GL}_2(\Q)$.
\end{exmp}

\begin{defn}
	The \emph{Mumford--Tate group} of a real algebraic subgroup $F \subset G_\R$ is the smallest $\Q$-subgroup $H = \mathrm{MT}(F)$ of $G$ such that $F \subset H_\R$.
 We say $F$ is \emph{Hodge-generic} if $\mathrm{MT}(F) = G^{\mathrm{der}}$.
\end{defn}

We recall a result from Ratner theory on the image of these totally geodesic subvarieties of $\Omega$.

\begin{thm}[{\cite[Theorem 4.4]{Ull18}}]\label{thm:weaklySpecialDense}
	Let $F = F(\R)^+$ be a semi-simple subgroup of $G(\R)^+$ without compact factors. 
 Let $H = MT(F)$ be the Mumford--Tate group of $F$. The closure of $\Gamma \bs \Gamma F$ in $\Gamma \bs G(\R)^+$ is $\Gamma \bs \Gamma H(\R)^+$.
\end{thm}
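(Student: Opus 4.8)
The plan is to deduce this from Ratner's orbit closure theorem, using crucially that $\Gamma$ is arithmetic and not merely a lattice. Two observations make Ratner's theorem applicable: since $F=F(\R)^{+}$ is semi-simple with no compact factors, it is generated by its one-parameter unipotent subgroups; and since $G$ is semi-simple, the congruence subgroup $\Gamma$ is a lattice in $G(\R)^{+}$. Applying Ratner's theorem to the right-translation action of $F$ on $\Gamma\bs G(\R)^{+}$ therefore yields a closed connected subgroup $L\subseteq G(\R)^{+}$ with $F\subseteq L$, such that $\Gamma\cap L$ is a lattice in $L$, the orbit $\Gamma\bs\Gamma L$ is closed, and
\[\overline{\Gamma\bs\Gamma F}=\Gamma\bs\Gamma L.\]
Everything then reduces to the group-theoretic identification $L=H(\R)^{+}$.

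For the inclusion $H(\R)^{+}\subseteq L$ I would first upgrade $L$ to an algebraic object. Because $\Gamma\subseteq G(\Q)$ is arithmetic and $\Gamma\cap L$ is a lattice in $L$, the standard strengthening of Ratner's theorem for arithmetic quotients (equivalently, a Borel-density argument applied to the $\Q$-Zariski closure of $\Gamma\cap L$ in $G$) shows that $L$ is the identity component of the real points of a connected $\Q$-algebraic subgroup of $G$. That $\Q$-subgroup has real points containing $F$, so the minimality built into the definition $H=MT(F)$ forces it to contain $H$; hence $H(\R)^{+}\subseteq L$ and
\[\Gamma\bs\Gamma H(\R)^{+}\;\subseteq\;\Gamma\bs\Gamma L\;=\;\overline{\Gamma\bs\Gamma F}.\]

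For the reverse inclusion, $F$ is connected and contained in $H_{\R}$, hence in $H(\R)^{+}$, so $\Gamma\bs\Gamma F\subseteq\Gamma\bs\Gamma H(\R)^{+}$. Since $H=MT(F)$ is reductive (being a Mumford--Tate group), the orbit $\Gamma\bs\Gamma H(\R)^{+}$ is closed in $\Gamma\bs G(\R)^{+}$ --- this is the standard fact that a totally geodesic subvariety of $\Omega$ descends to a locally closed subvariety of $S$, equivalently that $\Gamma$-orbits through the real points of reductive $\Q$-subgroups are closed. Taking closures gives $\overline{\Gamma\bs\Gamma F}\subseteq\Gamma\bs\Gamma H(\R)^{+}$, and combining the two inclusions yields $\overline{\Gamma\bs\Gamma F}=\Gamma\bs\Gamma H(\R)^{+}$.

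I expect the crux to be the algebraicity step. Ratner's theorem by itself delivers only a closed connected Lie subgroup $L$; the conclusion that $L$ is the real locus of a $\Q$-algebraic subgroup --- which genuinely uses that $\Gamma$ is arithmetic and is false for a general lattice --- is precisely what makes the minimality of the Mumford--Tate group usable, and is the heart of the matter. By comparison, the facts that $F$ is generated by unipotents and that the $H(\R)^{+}$-orbit is closed are routine.
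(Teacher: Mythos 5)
The paper does not actually prove this statement: it is imported from \cite{Ull18} without argument, so there is no internal proof to compare against. Your sketch is the standard Ratner-theoretic proof and is essentially the one in the cited source: Ratner's orbit closure theorem applies because $F$ is semisimple without compact factors (hence generated by one-parameter unipotent subgroups) and $\Gamma$ is a lattice by Borel--Harish-Chandra; the arithmetic refinement (Shah, Tomanov: the subgroup $L$ furnished by Ratner is the identity component of the real points of the $\Q$-Zariski closure of $\Gamma\cap L$) turns $L$ into a $\Q$-group whose real points contain $F$, and minimality of $H=MT(F)$ then yields $H(\R)^{+}\subseteq L$. You correctly identify this algebraicity step as the crux, and the two-inclusion structure is sound.

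One justification should be repaired. For the reverse inclusion you assert that $\Gamma\bs\Gamma H(\R)^{+}$ is closed because ``$H$ is reductive, being a Mumford--Tate group.'' In this paper $H$ is \emph{defined} as the smallest $\Q$-subgroup of $G$ whose real points contain $F$; reductivity is not automatic from that definition and is not the relevant reason the orbit is closed. The clean argument is: since $F$ is semisimple it is perfect, so $[H,H]$ is a connected $\Q$-subgroup of $G$ whose real points contain $F$, and minimality forces $H=[H,H]$; a perfect $\Q$-group has no nontrivial $\Q$-rational characters, so Borel--Harish-Chandra gives that $\Gamma\cap H(\R)$ is a lattice in $H(\R)$ and the orbit $\Gamma\bs\Gamma H(\R)^{+}$ is closed. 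With that substitution, and the standard care in the arithmetic refinement of Ratner (e.g.\ taking $L$ minimal so that $F$ acts ergodically on $(\Gamma\cap L)\bs L$, which is what rules out spurious compact factors of $L$), your argument is complete and matches the intended proof.
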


From this and the fact that for a given $x \in \Omega$, the map $\pi_x: \Gamma \bs G(\R)^+ \to \Gamma \bs \Omega$ by $\pi_x(g) = g \cdot x$ is closed, we can now describe the image of a totally geodesic subvariety of $\Omega$ in $S$.

\begin{cor}
    Let $Z \subset \Omega$ be totally geodesic so that $Z = F(\R)^+ x$ and let $H = \mathrm{MT}(F)$. 
    Then the Euclidean closure of $q(Z)$ in $S$ is $\Gamma \bs \Gamma H(\R)^+ \cdot x$.
\end{cor}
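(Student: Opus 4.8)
The plan is to transport the statement to the homogeneous space $\Gamma\bs G(\R)^+$ and apply Theorem \ref{thm:weaklySpecialDense}. First I would record the elementary identity $q(gx)=\Gamma(gx)=\pi_x(\Gamma g)$, valid for every $g\in G(\R)^+$, where $\pi_x\colon\Gamma\bs G(\R)^+\to S$ is the continuous and closed orbit map $\Gamma g\mapsto\Gamma(gx)$ discussed just above. Writing $\Gamma\bs\Gamma F$ for the image of $F=F(\R)^+$ in $\Gamma\bs G(\R)^+$, this shows that the image of the composite $F(\R)^+\hookrightarrow G(\R)^+\to\Gamma\bs G(\R)^+\xrightarrow{\pi_x}S$ is precisely $q(F(\R)^+x)=q(Z)$; in other words $q(Z)=\pi_x(\Gamma\bs\Gamma F)$, with no containment to spare.

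Next I would pass to Euclidean closures. Because $\pi_x$ is simultaneously continuous and closed, $\overline{\pi_x(A)}=\pi_x(\overline A)$ for every $A\subseteq\Gamma\bs G(\R)^+$: the set $\pi_x(\overline A)$ is closed and contains $\pi_x(A)$, giving $\overline{\pi_x(A)}\subseteq\pi_x(\overline A)$, while $\pi_x^{-1}(\overline{\pi_x(A)})$ is closed and contains $A$, hence contains $\overline A$, giving the reverse inclusion. Taking $A=\Gamma\bs\Gamma F$ and using Theorem \ref{thm:weaklySpecialDense}, which identifies $\overline{\Gamma\bs\Gamma F}$ with $\Gamma\bs\Gamma H(\R)^+$ for $H=MT(F)$ (note $H(\R)^+\subseteq G(\R)^+$ since it is a connected subgroup of $G(\R)$ containing the identity), we obtain
\[
\overline{q(Z)}=\overline{\pi_x(\Gamma\bs\Gamma F)}=\pi_x\bigl(\overline{\Gamma\bs\Gamma F}\bigr)=\pi_x\bigl(\Gamma\bs\Gamma H(\R)^+\bigr).
\]
Unwinding $\pi_x$ one last time, $\pi_x(\Gamma\bs\Gamma H(\R)^+)=\{\Gamma(gx):g\in H(\R)^+\}=\Gamma\bs\Gamma H(\R)^+\cdot x$, which is the asserted description.

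The serious content all sits inside Theorem \ref{thm:weaklySpecialDense} (Ratner orbit closures in $\Gamma\bs G(\R)^+$), which we are assuming; the rest is bookkeeping. The one point to be careful about — and what I would flag as the main, if minor, obstacle — is the interchange of the closure operation with $\pi_x$: a priori $\overline{q(Z)}$ could be strictly larger than the $\pi_x$-image of $\overline{\Gamma\bs\Gamma F}$, and it is exactly the closedness of $\pi_x$ (recorded immediately before the statement, and ultimately a consequence of the compactness of the point stabilizer in $G(\R)^+$) that rules this out. Once that is in hand the corollary follows at once.
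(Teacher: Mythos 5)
Your proposal is correct and follows exactly the route the paper intends: the sentence immediately preceding the corollary indicates that it is deduced from Theorem \ref{thm:weaklySpecialDense} together with the fact that the orbit map $\pi_x$ is closed, and your argument just makes that deduction explicit (the identity $\overline{\pi_x(A)}=\pi_x(\overline{A})$ for a continuous closed map is the only bookkeeping needed). No gaps.
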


In general, this closure is a real analytic subset that need not be an algebraic variety.
However, if we take the Zariski closure of $q(Z)$, we will get a weakly special subvariety \cite[Theorem 1.2]{Ull18}.

\begin{cor}
    Let $Z \subset \Omega$ be totally geodesic so that $Z = F(\R)^+ x$ and let $H = \mathrm{MT}(F)$. 
    Then the $Z$ is free if and only if $F$ is Hodge-generic.
\end{cor}

\begin{exmp}
    When $g \in \SL_2(\R) \bs \SL_2(\Q)$, the real algebraic subgroup $F(\R) := \{(h, ghg^{-1}) : h \in \SL_2(\R)\}$ is Hodge-generic and therefore the set of $(j(\tau), j(g\tau))$ is Euclidean dense in $\C^2$.
\end{exmp}

\subsection{Proof of Theorem \ref{thm:ProductDenseImage}}\label{sec:proofProduct}
First we prove some results on what the intersection of $q(L)$ and $W$ can look like. 
The result to be formulated as Lemma \ref{lem:NoInclusion} below will rely on the following Ax--Schanuel theorem for Shimura varieties proved in \cite{Mok19} as well as a finiteness result for weakly optimal subvarieties proved in \cite{Daw18}.

\begin{thm}[Ax--Schanuel \cite{Mok19}]
\label{thm:ax-schanuel}
    Let $V \subset \C^N \times S$ be an algebraic subvariety and let $U$ be an irreducible component of the intersection $V \cap E_q$ with the graph of the quotient map $q$.
    If $\dim U > \dim V - N$, then the projection of $U$ to $S$ is contained in a proper weakly special subvariety of $S$.
\end{thm}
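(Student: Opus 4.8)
The statement is the Ax--Schanuel theorem for Shimura varieties of Mok--Pila--Tsimerman, and the plan is to reproduce the Pila--Zannier o-minimal counting strategy that underlies it (the same skeleton used for the Ax--Schanuel theorem for $j$ in \cite{pila-tsimerman}). Write $\tilde U\subset\Omega$ for the image of $U$ under the projection to $\C^N$; since $U\subset E_q$ we have $U=\{(x,q(x)):x\in\tilde U\}$, so $\dim U=\dim\tilde U$, and we let $Y\subset S$ denote the Zariski closure of $q(\tilde U)$. Arguing by contradiction, assume that $Y$ is Hodge-generic in $S$, i.e.\ not contained in any proper weakly special subvariety; the goal becomes to show that then $\dim U\le\dim V-N$, i.e.\ that the intersection is of expected dimension.

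First I would fix a fundamental set $\mathfrak F\subset\Omega$ of Siegel type for the $\Gamma$-action and invoke the definability of the restriction $q|_{\mathfrak F}$ in the o-minimal structure $\mathbb R_{\mathrm{an},\exp}$ (Klingler--Ullmo--Yafaev, and more generally Bakker--Klingler--Tsimerman for period maps). This turns the incidence data $\{(\gamma,Z):\gamma\in\Gamma,\ Z\text{ an irreducible component of }\gamma\mathfrak F\cap\tilde U\}$, parametrised by the matrix entries of $\gamma$ together with the coefficients cutting out $V$, into a definable family. The engine is then a polynomial volume/height estimate: when $Y$ is Hodge-generic the stabiliser of $\tilde U$ in $G(\R)^+$ is small (this is where the algebraic monodromy input enters), so that the translates $\gamma\tilde U$ are spread out enough that $\sum_{\mathrm{ht}(\gamma)\le T}\mathrm{vol}(\gamma\tilde U\cap\mathfrak F)$ grows like a positive power of $T$. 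Applying the Pila--Wilkie counting theorem to the definable family then forces, for large $T$, the presence of a positive-dimensional semialgebraic block of such translates; transporting this back produces a positive-dimensional algebraic subvariety $B\subset\check X$ with $B\cap\Omega\subset q^{-1}(Y)$ whose dimension exceeds what the naive count permits.

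At that point I would invoke the hyperbolic Ax--Lindemann theorem for $S$ (Klingler--Ullmo--Yafaev; see also \cite{pila-tsimerman} for $\mathbb A_g$): the Zariski closure in $S$ of $q(B\cap\Omega)$ is weakly special. It is contained in $Y$, and the dimension bookkeeping forces it to be positive-dimensional yet strictly smaller than $\dim Y$ unless $Y$ itself is weakly special, contradicting Hodge-genericity; organising this as an induction on $\dim S$---at each stage replacing $(G,X)$ by the sub-Shimura datum cut out by the weakly special subvariety just produced---closes the loop. I expect the main obstacle to be exactly the assembly of these three external inputs: (i) the definability of $q$ on a fundamental domain, (ii) the sharp volume/monodromy lower bound that makes the Pila--Wilkie count effective, and (iii) the hyperbolic Ax--Lindemann theorem needed to recognise the semialgebraic blocks as weakly special subvarieties. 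Each is a substantial theorem in its own right, and matching the dimension of the block output by Pila--Wilkie against the atypicality defect $\dim U-(\dim V-N)$ is the delicate technical core of the argument.
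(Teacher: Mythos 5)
The statement you are trying to prove is not proved in the paper at all: it is quoted verbatim as an external black box, namely the Ax--Schanuel theorem for Shimura varieties of Mok--Pila--Tsimerman \cite{Mok19}, and the paper only ever \emph{uses} it (in Lemma \ref{lem:NoInclusion}). So there is no internal argument to compare yours against; the only honest options for this item are to cite the reference, as the authors do, or to actually reprove a roughly eighty-page Annals theorem. Your proposal does neither. It is a reasonable aerial photograph of the Pila--Zannier strategy, but every load-bearing step is deferred: the definability of $q|_{\mathfrak F}$, the volume lower bound, the Pila--Wilkie application, and the induction are all invoked rather than carried out, and you say yourself that the ``delicate technical core'' (matching the block dimension against the defect $\dim U-(\dim V-N)$) is not done. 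As a proof this is a genuine gap, not a stylistic choice.

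Beyond incompleteness, two points of the sketch misrepresent how the actual argument runs. First, the Mok--Pila--Tsimerman proof does not take the hyperbolic Ax--Lindemann theorem as an input to recognise the semialgebraic blocks; the logical flow is the other way around (Ax--Lindemann is the special case of Ax--Schanuel in which $U$ projects onto an algebraic subvariety of $\Omega$, and the general case cannot be reduced to it). What actually closes the loop is a stabiliser argument: the positive-dimensional blocks of $\gamma$'s produced by Pila--Wilkie force the stabiliser of the relevant algebraic set in $G(\R)$ to be positive-dimensional, and then the Andr\'e--Deligne monodromy theorem (normality of the connected algebraic monodromy group in the derived Mumford--Tate group) is what converts this into the weakly special conclusion. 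Second, the lower bound that feeds Pila--Wilkie is not a generic ``translates are spread out'' heuristic; it is the Hwang--To estimate comparing the volume of an analytic subvariety of $\Omega$ near the boundary of a hyperbolic ball with the number of fundamental-domain translates it meets, and without that specific input the polynomial growth in $T$ does not follow. If your goal is to justify this theorem within the paper, the correct move is simply the citation already given.
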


\begin{lem}\label{lem:NoInclusion}
    Let $W \subset S$ be a free proper subvariety.
    There exists a Zariski open subset $Z \subset W$ such that for any totally geodesic subvariety $L \subset \Omega$ such that $L\times W$ is broad, any intersection component of $q(L) \cap W$ that intersects $Z$ has the expected dimension $\dim L + \dim W - \dim S$.
\end{lem}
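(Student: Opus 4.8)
The plan is to build the dense set $Z$ as the complement in $W$ of a countable union of proper analytic (in fact, weakly special) subvarieties, and then use the Ax--Schanuel theorem together with the Hodge-genericity of $W$ to rule out atypical intersection components meeting $Z$. First I would observe that there are only countably many weakly special subvarieties of $S$ (since each arises from a $\Q$-algebraic subgroup of $G$, and these form a countable set up to the relevant conjugacy), and hence only countably many that are proper. Let $\{S'_k\}_{k\in\N}$ enumerate the proper weakly special subvarieties of $S$, and set $Z := W \setminus \bigcup_k (W \cap S'_k)$. Since $W$ is Hodge generic, no $S'_k$ contains $W$, so each $W \cap S'_k$ is a proper analytic (closed) subset of $W$; a countable union of such sets cannot cover $W$ by the Baire category theorem (applied to the locally compact, hence Baire, space $W$), and in fact its complement $Z$ is Euclidean dense in $W$.

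**Next I would run the Ax--Schanuel argument.** Fix a totally geodesic $L \subset \Omega$ with $(L,W)$ broad, and let $\tilde L \subset \C^N$ be the Zariski closure of $L$ (a weakly special subvariety of $\check X$, by the discussion after Theorem~\ref{thm:weaklySpecialDense}); consider the algebraic variety $\tilde L \times W \subset \C^N \times S$. Any intersection component $U$ of $q(L) \cap W$ corresponds (via the graph $E_q$) to a component of $(\tilde L \times W) \cap E_q$, of dimension $\dim U$. Note $\dim(\tilde L \times W) - N = \dim \tilde L + \dim W - N$; since $L$ is totally geodesic of the same dimension as $\tilde L$ (the Zariski closure does not change dimension for these orbit-varieties), the "expected dimension" in the statement, $\dim L + \dim W - \dim S$, agrees with $\dim(\tilde L\times W) - N$ once one accounts for the fact that $L$ sits in $\Omega \subset \C^N$ with $\dim\Omega = N$ — I would need to check that $\dim L + \dim W - \dim S$ is the correct bookkeeping, replacing $N$ by $\dim S$ appropriately since the $S$-coordinate already carries the quotient; the clean way is to work with the component of $(\tilde L \times W)\cap E_q$ and compare against $\dim(\tilde L \times W) - N$. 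If some component $U$ meeting $Z$ had dimension strictly greater than expected, Ax--Schanuel forces the projection of $U$ to $S$ into a proper weakly special subvariety $S'$. But the projection of $U$ to $S$ lies in $q(L)\cap W$, hence meets $Z$; and it lies in $S'$, hence in $W\cap S'_k$ for some $k$ — contradicting $Z \cap S'_k = \emptyset$.

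**The main obstacle** I expect is the dimension bookkeeping: reconciling the "expected dimension" $\dim L + \dim W - \dim S$ written in the lemma with the intersection-theoretic bound $\dim\bigl((\tilde L\times W)\cap E_q\bigr) \geq \dim(\tilde L\times W) - N$ supplied by Ax--Schanuel, and verifying that equality of dimensions (not just the $\leq$ direction) holds for components meeting $Z$. Here one uses that $E_q$ is the graph of a dominant (indeed surjective, with generically finite fibres onto $\Omega$) map, so intersecting with it over $\Omega$ cuts dimension by exactly $N - \dim\Omega = 0$ in the $\C^N$-factor but the relevant count is against $\dim S$ in the quotient; concretely $q(L)$ has dimension $\dim L$ and lives in the $\dim S$-dimensional variety $S$, so a transverse intersection with $W$ has dimension $\dim L + \dim W - \dim S$, and Ax--Schanuel's inequality (rephrased for $q(L)$, which is the image of the weakly special $L$) gives exactly $\dim U \geq \dim L + \dim W - \dim S$ always, with the reverse inequality on components meeting $Z$ being the content. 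I would also need to confirm that $\tilde L$ being weakly special (rather than arbitrary algebraic) is what makes the expected dimension take this simple product form rather than involving $\dim\tilde L$ separately — this is where broadness of $(L,W)$ enters, ensuring the relevant projections are large enough that no "automatic" excess occurs from the ambient geometry. Once the bookkeeping is pinned down, the argument is exactly: excess dimension $\Rightarrow$ (Ax--Schanuel) projection to $S$ weakly special $\Rightarrow$ component avoids $Z$ by construction.
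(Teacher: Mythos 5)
There is a genuine gap, and it sits at the very first step: you claim that there are only countably many weakly special subvarieties of $S$, because ``each arises from a $\Q$-algebraic subgroup of $G$.'' This is false. A weakly special subvariety is the image of $X_1 \times \{y_2\}$ for a decomposition $(H^{ad},X_H^{ad}) = (H_1,X_1)\times(H_2,X_2)$ \emph{and a choice of point} $y_2 \in X_2$, which varies continuously. So weakly special subvarieties come in uncountable families; indeed every single point of $S$ is a proper weakly special subvariety (take $H_1$ trivial). Consequently your set $Z = W \setminus \bigcup_k (W\cap S'_k)$ is empty, and the Baire category argument does not apply since the union is uncountable. Only the \emph{special} subvarieties (those determined by $\Q$-data alone) form a countable collection.

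This is precisely the difficulty the paper's proof is engineered to avoid. There, $Z$ is obtained by removing from $W$ only (i) the countably many proper \emph{special} subvarieties and (ii), for each of the countably many splittings $G^{ad}=G_1\times G_2$, the locus where the fibers of $p_i|_W$ have atypical dimension. Then, when Ax--Schanuel places an atypical component $U$ inside a proper weakly special subvariety, one cannot simply contradict the definition of $Z$; instead one argues that, since $U$ meets $Z$ and hence lies in no proper \emph{special} subvariety, the weakly special subvariety containing $U$ must be of the form $S_1\times\{q(x)\}$ for a splitting of $G^{ad}$ itself. The contradiction is then extracted from a fiber-dimension count: $\dim U = \dim L_x + \dim W_{q(x)} - \dim S_1$ combined with broadness ($\dim p_2(L)+\dim p_2(W)\ge \dim S_2$) forces $\dim L < \dim L_x + \dim p_2(L)$, which is impossible because $L$ is a group orbit and all its fibers over $p_2(L)$ have the same dimension. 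Note that broadness and the homogeneity of $L$ are essential inputs here, whereas in your sketch broadness never actually enters the argument (you flag this yourself in your ``main obstacle'' paragraph). To repair your proof you would need to replace the removal of all weakly special subvarieties by the paper's two-part construction of $Z$ and supply the fiber-dimension step.
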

\begin{proof}
    For each splitting of $G^{\ad} = G_1 \times G_2$ into two normal $\Q$-subgroups, we get a splitting of Hermitian domains $\Omega = \Omega_1 \times \Omega_2$, and as in Definition \ref{defn:decomposition of Shimura Variety}, we may choose levels $K_i$ suitably to get projections $p_i \coloneqq S \to S_i$.
    The fiber-dimension theorem implies that there exist Zariski open sets $U_i \subset \overline{p_i(W)}^{\mathrm{Zar}}$ such that for any $s \in U_i$, the dimension of the fiber is as expected $\dim W_s = \dim W - \dim p_i(W)$.
    By a result of Daw and Ren (\cite[Proposition 6.3]{Daw18}), there exists a finite set $\Sigma$ of Shimura subdata of $(G, X)$ such that all weakly optimal subvarieties of $W$ are described by splittings of the Shimura subdata in $\Sigma$.
    By the discussion after Conjecture 15.4 of \cite{Daw18}, all maximal atypical intersections with $W$ are optimal and hence weakly optimal (\cite[Corollary 4.5]{Daw18}).
    Thus, all maximal atypical intersections with $W$ are described by splittings of the Shimura subdata in $\Sigma$.
    We remove $(G, X)$ from $\Sigma$ so that $\Sigma$ only contains proper Shimura subdata.
    Define
    \[Z := \bigcap_{\Omega = \Omega_1 \times \Omega_2} \left(W \cap p_1^{-1}(U_1) \cap p_2^{-1}(U_2)\right) \cap \bigcap_{S_M \in \Sigma} \left(W \cap S_M^c\right),\]
    where the first intersection is over all splittings of $G^{\ad}$.
    Since there are only finitely many ways to split $G^{\ad} = G_1 \times G_2$ into a product of $\Q$-subgroups and finitely many special subvarieties in $\Sigma$, none of which contain $W$, the intersection $Z$ is a Zariski open subset of $W$.
    
    Now let $U$ be an intersection component of $q(L) \cap W$ so that $U \cap Z \neq \emptyset$ and suppose that $\dim U > \dim L + \dim W - \dim S$.
    Then Ax--Schanuel (Theorem \ref{thm:ax-schanuel}) says that $U$ is contained in a proper weakly special subvariety of $S$.
    Thus $U$ lies in a maximal atypical intersection of $W$ with a splitting of a Shimura subdatum in $\Sigma \cup \{(G, X)\}$.
    Since $U$ intersects $Z$, the proper weakly special subvariety containing $U$ must be obtained from a splitting of $G^{\ad} = G_1 \times G_2$.
    This gives rise to a splitting of Hermitian symmetric domains $\Omega = \Omega_1 \times \Omega_2$ and a finite map $p \colon S \to S_1 \times S_2$ so that $U$ being contained in a weakly special subvariety means there exists $x \in \Omega_2$ such that $p(U) \subset q(\Omega_1 \times \{x\})$. 
    We may choose this splitting so that $p_1(U)$ is free inside $S_1$.
    
    The projection $p_2(U)$ is constant on $S_2$, so $U$ is actually in the intersection of the fibers $q(L_x) \cap p(W)_{q(x)} \subset S_1$.
    But by construction, $U$ is not contained in a proper weakly special subvariety of $S_1$ and hence $\dim U = \dim L_x + \dim p(W)_{q(x)} - \dim S_1$.
    By the fiber-dimension theorem, we can write the dimension of the fiber of $p(W)$ as $\dim p(W)_{q(x)} = \dim p(W) - \dim p_2(W)$ and $\dim p_2(L) + \dim p_2(W) \ge \dim S_2$ by the broadness condition. 
    The map $p$ is finite so $\dim W = \dim p(W)$.
    Combining this with $\dim U > \dim L + \dim W - \dim S$ gives
    \[\dim L < \dim L_x + \dim p_2(L).\]
    But, this is impossible since $L$ is totally geodesic and equals the orbit of a point under the group action, meaning that all of its fibers are of the same dimension.
    Thus $U$ must have expected dimension.
\end{proof}

We are now ready to prove the theorem.

\begin{proof}[Proof of Theorem \ref{thm:ProductDenseImage}]
We first show that $q(L) \cap W \neq \emptyset$.
There are only finitely many ways to split $G^{\ad} = G_1 \times G_2$ and hence $\Omega = \Omega_1 \times \Omega_2$. 
Furthermore, all the weakly special subvarieties of $S$ can be placed into countably many algebraic families of subvarieties of $S$. 
So we can cut $W$ down with generic hyperplanes to maintain that $W$ is still free and that $L\times W$ is broad, and furthermore $\dim L = \text{codim} W = d$.
	
Choose a real semi-simple subgroup $F = F(\R)^+ \subset G(\R)^+$ and point $x \in \Omega$ such that $L = Fx$. 
For each smooth point $w \in Z \subset W$, where $Z$ is as in Lemma \ref{lem:NoInclusion}, choose some $g \in G^{\mathrm{der}}(\R)^+$ such that $w = q(gx)$.
We will show that in any open neighborhood of $w$, there exists a point in the intersection $W \cap q(L)$.
If $g \in F$ then $w \in q(L)$ and we are done, so assume that $g \not \in F$. 
The group $F$ is Hodge-generic meaning $\mathrm{MT}(F) = G^{\mathrm{der}}$ so by Theorem \ref{thm:weaklySpecialDense}, there exists a sequence $\{f_i\}_{i\in\N} \subset F(\R)^+$ and a sequence $\{\gamma_i\}_{i\in\N} \subset \Gamma$ such that $\gamma_i f_i \to g$ as $i\to\infty$. 
 Since $w$ is a smooth point of $W$, in a small neighborhood $V$ of $w$, there exist functions $p_1, \dots, p_d: S \to \C$ such that $W$ is cut out by the $p_i$. 
 Define $P: L \to \C^d$ as the function $P(z) = (p_1(q(gz)), \dots, p_d(q(gz)))$ and $P_i: L \to \C^d$ by $P_i(z) = (p_1(q(\gamma_if_i z)), \dots, p_d(q(\gamma_if_i z)))$.
Hence we have $P_i \to P$ locally uniformly as $i\to\infty$, and $P(x) = 0$.
	
Since $\dim L = \text{codim} W$, $w\in Z$ and $gL\times W$ is broad, the components of $q(gL)\cap W$ that intersect $Z$ have dimension 0 by Lemma \ref{lem:NoInclusion}, and so $x$ is an isolated zero of $P$ and there exists a Euclidean open neighborhood $U \subset L$ containing $x$ such that $P(y) \neq 0$ for all $y \in \overline{U} \bs \{x\}$.
For each $N \in \N$, let $U_N \coloneqq U \cap B(x, \frac{1}{N})$, the points of $U$ that lie within a radius $\frac{1}{N}$ ball centered at $x$.
Let $\epsilon_N = \inf_{y \in \bdy U_N} \|P(y)\| > 0$. 
The $P_i$ locally uniformly converge to $P$ and $\bdy U_N$ is compact, so there exists $i_N$ such that $\sup_{y \in \bdy U_N} \|P_{i_N}(y) - P(y)\| < \epsilon_N$.
Rouch\'e's theorem implies that there exists $x_N \in U_N$ such that $P_{i_N}(x_N) = 0$. 
This means that $q(\gamma_{i_N} f_{i_N} x_N) = q(f_{i_N} x_N) \in W$. 
Since $x_N \in L$ and $f_{i_N} \in F(\R)^+$, we have $f_{i_N}x_N \in L$ as well.
Observe that $x_N \to x$ as $N\to\infty$ and hence $\gamma_{i_N}f_{i_N}x_N \to gx$.
Thus we have a sequence $f_{i_N}x_N \in L$ such that $q(f_{i_N}x_N) \to w$ and $q(f_{i_N}x_N) \in W$, and thus any open neighborhood of $w$ contains a point in $q(L) \cap W$.
Since this is true for any smooth point of $W$ lying in $Z$, we get a Euclidean dense intersection $W \cap q(L)$ inside of $W$.

\end{proof}

\bibliographystyle{alpha}
\bibliography{references}{}

\end{document}